\documentclass{article}
\usepackage{fullpage}
\usepackage{graphics}
 \usepackage{graphicx,xcolor}	
\usepackage{amssymb}
 \usepackage{amsmath}
 \usepackage{amsthm,lmodern}

 \newtheorem{theorem}{Theorem}[section]
\newtheorem{lemma}{Lemma}[section]
\newtheorem{proposition}[lemma]{Proposition}

\renewcommand{\H}{\mathcal{H}} 

\newcommand{\Lra}{\Longrightarrow}

\newcommand{\sh}{\sharp}
\newcommand{\pd}{\partial}

\newcommand{\fal}{\forall}

\newcommand{\vep}{\varepsilon} 
 
\newcommand{\dt}{\delta} 
\newcommand{\al}{\alpha}

\newcommand{\ggm}{\Gamma}

\newcommand{\om}{\Omega}
\newcommand{\la}{\lambda} 
\renewcommand{\d}{\,\text{d}}
\newcommand{\TT}{{\mathbb{T}^2}}


 \title{On a Quaternary Non-Local Isoperimetric Problem}

\author{Stanley Alama
\thanks{
Department of Mathematics and Statistics, McMaster University. E-mail: alama@mcmaster.ca} 
 \qquad Lia Bronsard 
 \thanks{
Department of Mathematics and Statistics, McMaster University. E-mail: bronsard@mcmaster.ca } 
 \qquad Xinyang Lu
 \thanks{
 Department of Mathematical Sciences, Lakehead University. 
 Email: xlu8@lakeheadu.ca
 }
 \qquad Chong Wang
 \thanks{
 Department of Mathematics, Washington and Lee University. Email: cwang@wlu.edu
 }
}

 \date{}							
 
 \begin{document}
 
 \maketitle

 This paper is a heartfelt tribute to Bob Pego, acknowledging his profound and enduring contributions to the field of PDEs and applied mathematics.
 \\

\begin{abstract}

We study a two-dimensional quaternary inhibitory system. This free energy functional combines
an interface energy favoring micro-domain growth with a 
Coulomb-type long range interaction energy which prevents micro-domains from unlimited spreading.  Here we consider a limit in which three species are vanishingly small, but interactions are correspondingly large to maintain a nontrivial limit. In this limit two energy levels are distinguished:  the highest order limit encodes information on the geometry of local structures as a three-component isoperimetric problem,
while the second level describes the spatial distribution of components in global minimizers.  Geometrical descriptions of limit configurations are derived. 
	
\end{abstract}

 \numberwithin{equation}{section}             


\section{Introduction}

Block copolymers, a class of soft materials, have gained much interest in recent years due to their ability of self-organization into nanoscale ordered structures \cite{bf}.
An $ABCD$ tetrablock copolymer is a linear-chain molecule consisting of four subchains, joined covalently to each other.
Because of the repulsive forces between different types of monomers, different types of subchain tend to segregate. However, since subchains are chemically bonded in molecules, segregation can lead to a phase separation only at microscopic level, 
where $A, B, C$ and $D$-rich micro-domains emerge, forming morphological phases, many of which have been observed experimentally.


To model tetrablock copolymers, one uses a free energy
\begin{eqnarray}  \label{energyu}
\mathcal{E} (u) :=  \frac{1}{2} \sum_{i=0}^3 \int_{\mathbb{T}^2} |\nabla u_i | +  \sum_{i,j = 1}^3  \frac{\gamma_{ij}}{2} \int_{\mathbb{T}^2} \int_{\mathbb{T}^2} G_{\mathbb{T}^2}(x-y)\; u_i (x) \; u_j (y) dx dy
\end{eqnarray}
defined on $BV(\mathbb{T}^2; \{0,1\})$.
In \eqref{energyu}, $u = (u_1, u_2, u_3 )$,  and $u_0 = 1 - \sum_{i=1}^3 u_i $.
The order parameters $u_i, i = 0, 1, 2, 3 $ are defined on $\mathbb{T}^2 = \mathbb{R}^2 / \mathbb{Z}^2=[ - \frac{1}{2}, \frac{1}{2} ]^2$, i.e., the two dimensional flat torus
of unit volume, with periodic boundary conditions. 
Each $u_i$, which represents the relative monomer density, has two preferred states: $u_i = 0$ and $u_i = 1$. 
Case $u_i = 1, i = 1, 2, 3$ or $0$ corresponds to a pure-$A$, pure-$B$, pure-$C$, or pure-$D$ region, respectively. 
Thus, each $u_i=\chi_{\Omega_i}$ with supports $\Omega_i, \ i=0,1,2, 3$ which partition $\mathbb{T}^2$:  $\Omega_i$ are assumed to be mutually disjoint
and  $ \sum_{i=0}^3 u_i =1$ almost everywhere on $\mathbb{T}^2$.

The energy is minimized under three mass or area constraints
\begin{eqnarray} \label{constrain}
 \frac{1}{| \mathbb{T}^2 |}   \int_{\mathbb{T}^2}  u_i  = M_i,  i = 1, 2, 3.
\end{eqnarray}
Here $M_i, i = 1, 2, 3 $ are the area fractions of type-$A$, type-$B$ and type-$C$ regions, respectively. Constraints \eqref{constrain} model the fact
 that, during an experiment, the compositions of the molecules do not change.

The first term in  \eqref{energyu} counts the perimeter of the interfaces:  indeed, for $u_i\in BV(\mathbb{T}^2; \{0,1\})$,
\[ \int_{\mathbb{T}^2} | \nabla u_i |  : = \sup \left \{  \int_{\mathbb{T}^2}  u_i \  \text{div}  \varphi \ dx:  \varphi = (\varphi_1, \varphi_2)\in C^1 (\mathbb{T}^2 ; \mathbb{R}^2), |\varphi(x)| \leq 1 \right \}, \nonumber
\]
 defines the total variation of  the characteristic function $u_i$.  The factor $\frac{1}{2}$ acknowledges that each interface between the phases is counted twice in the sum.

The second part of \eqref{energyu} is the long range interaction energy, associated with the connectivity of sub-chains in the tretrablock copolymer molecule.
The long range interaction coefficients $\gamma_{ij}$ form a symmetric matrix 
\[ \gamma = [\gamma_{ij}]\in \mathbb{R}^{3\times 3}. \]
Here $G_{\mathbb{T}^2}$ is the zero-mean Green's function for $- \triangle$ on $\mathbb{T}^2$ with periodic boundary conditions, satisfying 
\begin{equation}  \label{GLap}
 -\Delta G_{\mathbb{T}^2}(\cdot - y) = \delta(\cdot-y)- 1
 \text{ in } \mathbb{T}^2; 
  \int_{\mathbb{T}^2} {G_{\mathbb{T}^2} (x - y)} dx=0
 \end{equation}
for each $  y\in \mathbb{T}^2$. In two dimensions, the Green's function $G_{\mathbb{T}^2}$ has the local representation 
\begin{equation} \label{G2def}
G_{\mathbb{T}^2}(x - y)= - \frac{1}{2\pi}\log | x-y | + R_{\mathbb{T}^2} (x - y),
\end{equation}
for $|x-y|<\frac12$.
Here $R_{\mathbb{T}^2}\in C^\infty(\mathbb{T}^2)$ is the regular part of the Green's function.

As were the cases for the Ohta-Kawasaki model of diblock copolymers and the Nakazawa-Ohta model of triblock copolymers, nonlocal quaternary systems are of high mathematical interest because
of the diverse patterns which are expected to be observed by its minimizers. 
Just as the diblock copolymer problem or the triblock copolymer problem may be formulated as a nonlocal isoperimetric problem (NLIP) which partitions space into two or three components, respectively, the tetrablock model is an NLIP based on partitions into four disjoint components. 


Here we consider an asymptotic regime where three minority phases have vanishingly small areas. Like \cite{bi1, ablw}, introduce a new parameter $\eta$, controlling the vanishing areas. That is, 
\[   \int_{\mathbb{T}^2}  u_i = \eta^2 M_i \]
for some fixed $M_i$, $i=1, 2, 3$, and rescale $u_i$ as
\begin{eqnarray}
v_{i, \eta}^{} = \frac{ u_i }{\eta^2}, \; i=0,1,2,3,  \; \;\text{ with }   { \int}_{\mathbb{T}^2}  v_{i, \eta}  =  M_i, \; i=1,2,3.
\end{eqnarray}

The matrix $\gamma=[\gamma_{ij}]$ is also scaled, in such a way that both terms in \eqref{energyu} contribute at the same order in $\eta$.  This can be achieved by choosing
\begin{eqnarray}
\gamma_{ij} = \frac{1}{|\log \eta| \eta^3} \Gamma_{ij},    \notag
\end{eqnarray}
with fixed constants $\Gamma_{ij}$. 
Let $v_{\eta} = (v_{1,\eta}, v_{2, \eta}, v_{3, \eta})$. 
Assume $v_\eta$ lies in the space
\begin{equation}\label{Xspace}
  X_\eta:=\left\{ (v_{1,\eta}, v_{2, \eta}, v_{3, \eta}) \ | \  \eta^2 v_{i,\eta}\in BV(\TT; \{0,1\}), \  v_{i,\eta}\, v_{j, \eta} = 0 \ \  a.e. \ \ 1 \leq i < j \leq 3\right\}. 
\end{equation}
For $v_\eta\in X_\eta$, we define
\begin{eqnarray} \label{Eeta}
  E_{\eta}^{} (v_{\eta}) := \frac{1}{\eta} \mathcal{E} ( u ) 
  =
 \frac{\eta}{2} \sum_{i=0}^3 \int_{\mathbb{T}^2} |\nabla v_{i,\eta} | +  \sum_{i,j = 1}^3  \  \frac{  \Gamma_{ij} }{2 |\log \eta| } \int_{\mathbb{T}^2} \int_{\mathbb{T}^2} G_{\mathbb{T}^2}(x - y)  v_{i, \eta}(x) v_{j, \eta}(y) dx dy, \end{eqnarray}
and $E_\eta(v_\eta)=+\infty$ otherwise.
Consider $v_{i, \eta}$ of components of the form
\begin{eqnarray} \label{vi}
v_{i, \eta} = \sum_{k=1}^{\infty} v_{i, \eta}^k,   \text{ and }    v_{i, \eta}^k = \frac{1}{\eta^{2}} \chi_{\Omega_{i,\eta}^k},
\end{eqnarray}
where $ | \Omega_{i,\eta}^k \cap \Omega_{j,\eta}^l | = 0$ when $i \neq j$ or $k \neq l$, each $\Omega_{i,\eta}^k$ is a connected subset of $\Omega_{i,\eta}$, and $\Omega_{i,\eta}= \cup_{k=1}^{\infty} \Omega_{i,\eta}^k$, $i=1, 2, 3$.
Here $|\Omega_{i,\eta}^k \cap \Omega_{j,\eta}^l |$ denotes the area (i.e., the Lebesgue measure) of $ \Omega_{i,\eta}^k \cap \Omega_{j,\eta}^l $.
Assume that each $\Omega_{i,\eta}^k$ does not intersect $\partial \mathbb{T}^2 $, and then we extend $v_{i,\eta}^k$ to zero outside of $\mathbb{T}^2 $. 
%
Let 
\begin{eqnarray} 
z_{i}^k (x):=  \eta^2 v_{i, \eta}^k (\eta x) .
\end{eqnarray} 
Note that $ z_{i}^k (x) : \mathbb{R}^2 \rightarrow \{0, 1\}$ and 
$  \int_{\mathbb{R}^2}  z_{i}^k = \int _{\mathbb{T}^2}  v_{i,\eta}^k $ and $ \int_{\mathbb{R}^2}  | \nabla z_{i}^k | =  \eta \int _{\mathbb{T}^2}  | \nabla v_{i,\eta}^k |.$
Calculation yields
\begin{eqnarray}
  E_{\eta}^{} (v_{\eta}) = \sum_{k=1}^{\infty} \left (      \frac{1}{2}  \sum_{i=0}^3 \int_{\mathbb{R}^2} |\nabla z_{i}^k | +  \sum_{i,j=1}^3  \frac{\Gamma_{ij} }{4\pi}  \left(  \int_{\mathbb{R}^2 }  z_{i}^k \right) \left(  \int_{\mathbb{R}^2 }  z_{j}^k \right)            \right )  + O \left (  \frac{1}{|\log \eta | } \right ).
\end{eqnarray}
Let $m := (m_1, m_2, m_3)$.  
Define
\begin{eqnarray} \label{e0m1}
e_0 (m )  &:= &   \inf  \Bigg \{ \frac{1}{2}  \sum_{i=0}^3 \int_{\mathbb{R}^2} |\nabla z_i | +  \sum_{i,j=1}^3  \frac{\Gamma_{ij} m_i m_j }{4\pi}:  
  z_i \in BV( \mathbb{R}^2; \{0,1\}),  i = 0, 1, 2, 3, \nonumber \\
 &&   \qquad \qquad    \int_{\mathbb{R}^2 } z_i  = m_i, i = 1, 2, 3, \; \ z_0 = 1 - \sum_{i=1}^3 z_i \Bigg \} 
\end{eqnarray}
Since a triple bubble is the unique solution to the three-component isoperimetric problem \cite{proof}
\begin{eqnarray}
 \inf  \Bigg \{  \frac{1}{2}  \sum_{i=0}^3 \int_{\mathbb{R}^2} |\nabla z_i | :  z_i \in BV( \mathbb{R}^2; \{0,1\}),  i = 0, 1, 2, 3, 
 \int_{\mathbb{R}^2 } z_i  = m_i, i = 1, 2, 3, \; \ z_0 = 1 - \sum_{i=1}^3 z_i \Bigg \}, \notag
\end{eqnarray}
\eqref{e0m1} can be simplified as 
\begin{eqnarray} \label{e0m2}
e_0^{} (m)  = p(m_1, m_2, m_3) + \sum_{i, j=1}^3 \frac{\Gamma_{ij} m_i m_j }{4\pi} , 
\end{eqnarray}
where $p(m_1, m_2, m_3) $ is the perimeter of a triple bubble in $\mathbb{R}^2$ when $m_i > 0, i = 1, 2, 3$.
In the case of one $m_i$ equals $0$, that is, $m = (m_1, m_2, 0)$ or $(m_1, 0, m_3)$ or $(0, m_2, m_3) $, a triple bubble degenerates to a double bubble and \eqref{e0m2} becomes 
\begin{eqnarray} \label{e0d}
\text{ } e_0(m) = e_0^d(m_i, m_j) :=  p^d (m_i, m_j) + \frac{\Gamma_{ii} (m_i)^2 }{4\pi} + \frac{\Gamma_{jj} (m_j)^2 }{4\pi} + \frac{\Gamma_{ij} m_i m_j }{2\pi},
\end{eqnarray}
where $1 \leq i,j \leq 3, i \neq j $, and $p^d (m_i, m_j) $ is the perimeter of a double bubble in $\mathbb{R}^2$.
In the case of two $m_i$ equal $0$, that is, $m = (m_1, 0, 0)$ or $(0, m_2, 0)$ or $(0, 0, m_3) $, a triple bubble degenerates to a single bubble and \eqref{e0m2} becomes 
\begin{eqnarray} \label{e0s}
\text{ } e_0(m)  = e_0^s(m_i) :=  2\sqrt{\pi m_i}  +  \frac{\Gamma_{ii} (m_i)^2 }{4\pi}.
\end{eqnarray}
Thus, we expect that minimizers of $E_\eta$ in two dimensions will always consist of single, double or triple bubbles (or mixture);  no other shapes are expected for the components of $\Omega_{i,\eta}$.  The spatial distribution of the single, double or triple bubbles on $\TT$ should be determined by the higher order terms of $E_\eta$ in a more detailed energy expansion.

To further discuss how the total masses $M=(M_1,M_2, M_3)$ are to be divided, define
  \begin{eqnarray}  \label{mine0}
 \overline{e_0 }(M ) := \inf \left\{ \sum_{k=1}^{\infty} e_0 (m^k ) :  m^k = (m_1^k, m_2^k, m_3^k ),  \ m_i^k \geq 0,\ \sum_{k=1}^{\infty}  m_i^k = M_i, i = 1, 2, 3 \right\}.
 \label{e0bar}
 \end{eqnarray}
The main result of this paper concerns the minimizers of $ \overline{e_0 }(M )$.
\begin{theorem} \label{coexistence}
 (coexistence) 
There exist global minimizers with arbitrarily large number of single, double or triple bubbles. 
That is, given $N_i$, $i=1,2,3$, there exist parameters $M_i$ and $\Gamma_{ij}$ such that
all global minimizers have at least $N_1$ (resp. $N_2$, $N_3$) triple bubbles (resp.  double, single bubbles).
 \end{theorem}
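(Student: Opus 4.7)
The strategy is an explicit construction of $M=(M_1,M_2,M_3)$ and $\Gamma=[\Gamma_{ij}]$ enforcing a \emph{hierarchical} mass allocation: species~$3$ is confined to triple bubbles, the remaining species-$2$ mass to $(1,2)$-doubles, and the residual species-$1$ mass to singles. The bubble counts in each regime are then made arbitrarily large by scaling the diagonal entries $\Gamma_{ii}$.

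As a preliminary I would show that $\overline{e_0}(M)$ is attained by a finite collection of single, double, or triple bubbles. Existence is by a standard direct method, while finiteness uses that $N$ equal pieces of fixed total mass contribute perimeter $2\sqrt{\pi N M}$, which grows with $N$; since any minimum-energy configuration has bounded total perimeter, the number of pieces of each type is bounded. Once the problem is finite-dimensional, the per-type optimization is elementary: for $n$ equal bubbles of a fixed type containing total species mass $M'$, the energy takes the form $n[c\sqrt{\alpha}+K\alpha^2/(4\pi)]$ subject to $n\alpha=M'$, yielding optimal $\alpha^{\star}\asymp K^{-2/3}$ and $n^{\star}\asymp M'K^{2/3}$, where $K$ is the positive combination of the $\Gamma_{ij}$ entries relevant to that type and $c$ is the corresponding perimeter constant. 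In particular every count can be made arbitrarily large by taking $\Gamma_{ii}$ large.

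Given $N_1,N_2,N_3$, I would choose $\Gamma_{11}=\Gamma_{22}=\Gamma_{33}=\lambda$ large, $\Gamma_{13}=\Gamma_{23}=-\kappa_1$ strongly attractive, and $\Gamma_{12}=-\kappa_2$ mildly attractive, with $\lambda\gg\kappa_1\gg\kappa_2>0$ chosen to keep $\Gamma$ positive definite. The strong attraction $\kappa_1$ forces every unit of species-$3$ mass to prefer a triple with species $1$ and $2$ over a species-$3$ single or an $(i,3)$-double for $i\in\{1,2\}$; the weaker $\kappa_2$ makes the leftover species-$2$ mass prefer $(1,2)$-doubles to species-$2$ singles; and large $\lambda$ causes each regime to fracture into many bubbles. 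Setting $M_3=N_1'\alpha_T^{\star}$, $M_2=M_3+N_2'\alpha_D^{\star}$, $M_1=M_2+N_3'\alpha_S^{\star}$ for any $N_i'\geq N_i$ then produces a target configuration with the required bubble counts.

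The main obstacle is proving that this target is the unique global minimizer of $\overline{e_0}(M)$. One must rule out every competing partition consistent with the mass constraints. The most delicate comparison is triple versus $(1,2)$-double plus species-$3$ single: the reconfiguration saves perimeter of order $(c_D+2\sqrt{\pi}-c_T)\sqrt{\alpha_T^{\star}}$ but pays lost attraction of order $\kappa_1(\alpha_T^{\star})^2/\pi$, and the latter dominates when $\kappa_1$ is chosen large; similar comparisons against $(1,3)$- and $(2,3)$-doubles lock species~$3$ into triples. Within-type merges or splits are ruled out by the strict convexity of the per-type cost function at its minimum, and the remaining cross-type moves (e.g.\ converting a species-$2$ single into an extension of an existing $(1,2)$-double) are handled by a finite case analysis based on the scaling laws. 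Once every inequality is strict, the global minimizer is unique up to permutation of identical bubbles, and therefore every global minimizer has at least $N_i$ bubbles of each type.
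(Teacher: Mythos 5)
Your overall architecture --- hierarchical total masses so that one species is exhausted by triples, the next by doubles, the last by singles, together with large diagonal entries $\Gamma_{ii}$ forcing fragmentation into many bubbles of optimal mass $\asymp \Gamma_{ii}^{-2/3}$ --- is the same as the paper's, and your scaling $n^{\star}\asymp M' K^{2/3}$ matches the paper's bound $m_i^+=8\pi\,\Gamma_{ii}^{-2/3}$. But there are genuine gaps in the execution. First, your finiteness argument fails as stated: the perimeter bound $2\sqrt{\pi N M}$ presumes $N$ \emph{equal} pieces, whereas a cascade of pieces with masses $M2^{-k}$ has total perimeter $\sum_k 2\sqrt{\pi M 2^{-k}}<\infty$, so bounded perimeter alone does not bound the number of components. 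What is actually needed is a uniform \emph{lower} bound on the mass of every lobe, which the paper extracts from the Lagrange-multiplier stability identity $\frac{1}{2\pi}\sum_j\Gamma_{ij}m_j^k+1/r_i^k=\lambda_i$ in Proposition~\ref{min mass}; you never produce it. Second, and more importantly, the whole difficulty of the theorem is the counting step: to choose $M_1,M_2,M_3$ one after another you must know that the per-lobe mass bounds $m_i^{\pm}$ are \emph{independent of the other totals} $M_j$, $j\neq i$ (the paper's Lemma~\ref{max mass} and Lemma~\ref{independent mass}); otherwise enlarging $M_3$ to create more singles could alter $m_1^{\pm}$ and destroy the count of triples. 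Your proposal asserts the scaling laws and defers all competing configurations to ``a finite case analysis,'' which is precisely the part that has to be carried out.

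Two further points. Your ``most delicate comparison'' has the sign backwards: by the planar triple bubble theorem, a standard double bubble together with a disjoint disk is an admissible competitor for the three-chamber isoperimetric problem, so $p(m_1,m_2,m_3)\le p^d(m_1,m_2)+2\sqrt{\pi m_3}$ and splitting a triple \emph{costs} perimeter rather than saving it. This works in your favor, and it shows the attractive off-diagonal couplings $\Gamma_{13},\Gamma_{23},\Gamma_{12}<0$ are unnecessary (and introduce avoidable worries about boundedness below and the derivation of $e_0$): the paper simply takes $\Gamma$ diagonal, so that merging two singles of distinct species into a double, or a single with a disjoint double lacking that species into a triple, leaves the quadratic term unchanged and strictly decreases perimeter; these two observations alone forbid the coexistence patterns that would spoil the hierarchy. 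Finally, you aim to show your target configuration is the \emph{unique} global minimizer, which is far stronger than required and not substantiated; the theorem only needs that \emph{every} global minimizer has at least $N_i$ bubbles of each type, and this follows once the two-sided lobe-mass bounds give $M_i/m_i^+\le \#\{\text{type-}i\text{ lobes}\}\le M_i/m_i^-$ for every minimizer.
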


These are proven via delicate comparison arguments based on the geometry of triple bubbles, in Section~\ref{section 2}.
In Section~\ref{section 3} we discuss minimizers at $\eta$ level.




While mathematical study of nonlocal quaternary systems 
is relatively limited,
recently there has been progress in mathematical analysis of nonlocal ternary systems:
\begin{eqnarray}  \label{energyT}
\mathcal{E} (u) :=  \frac{1}{2} \sum_{i=0}^2 \int_{\mathbb{T}^2} |\nabla u_i | +  \sum_{i,j = 1}^2  \frac{\gamma_{ij}}{2} \int_{\mathbb{T}^2} \int_{\mathbb{T}^2} G_{\mathbb{T}^2}(x-y)\; u_i (x) \; u_j (y) dx dy.
\end{eqnarray}
For ternary systems, one-dimensional stationary points to the Euler-Lagrange equations of \eqref{energyT} were found in \cite{lameRW, blendCR}. 
Two and three dimensional stationary configurations were studied recently in \cite{double, doubleAs, stationary, disc, evolutionTer}.
There are also plenty of mathematical work on nonlocal binary systems. Much early work
concentrated on the diffuse interface Ohta-Kawasaki density functional theory for diblock copolymers \cite{equilibrium, nishiura, onDerivation},
\begin{eqnarray} \label{energyB}
\mathcal{E}^{} (u) :=  \int_{\mathbb{T}^n} |\nabla u | +    \gamma \int_{\mathbb{T}^n} \int_{\mathbb{T}^n} G_{\mathbb{T}^n}(x-y)\; u (x) \; u (y) dx dy,
\end{eqnarray}
with a single mass or volume constraint. The dynamics for a gradient flow for \eqref{energyB} with small volume fraction were developed in \cite{hnr, gc}. 
All stationary solutions to the Euler-Lagrange equation of \eqref{energyB} in one dimension were known to be local minimizers \cite{miniRW}, and
many stationary points in two and three dimensions have been found that match the morphological phases in diblock copolymers \cite{oshita, many, ihsan, Julin3, cristoferi, afjm}.
The sharp interface nonlocal isoperimetric problems have been the object of great interest, both for applications and for their connection to problems of minimal or constant curvature surfaces.
Global minimizers of \eqref{energyB}, and the related Gamow's Liquid Drop model describing atomic nuclei, were studied in \cite{otto, muratov, bi1, st, GMSdensity, knupfer1, knupfer2, Julin, ms, fl} for various parameter ranges.
Variants of the Gamow's liquid drop model with background potential or with an anisotropic surface energy replacing the perimeter, are studied in \cite{ABCT1,luotto, cnt}.
Higher dimensions are considered in \cite{BC, cisp}.
Applications of the second variation of \eqref{energyB} and its connections to minimality and $\Gamma$-convergence are to be found in \cite{cs,afm,Julin2}.
 The bifurcation from spherical, cylindrical and lamellar shapes with Yukawa instead of Coulomb interaction has been done in \cite{fall}.
Blends of diblock copolymers and nanoparticles \cite{nano, ABCT2} and blends of diblock copolymers and homopolymers are also studied by \cite{BK,blendCR}.
Extension of the local perimeter term to nonlocal $s$-perimeters is studied in \cite{figalli}.

\subsection*{Acknowledgements}  Stanley Alama, Lia Bronsard, and Xinyang Lu gratefully acknowledge the support of the Natural Science and Engineering Research Council (Canada) through the Discovery Grants program.


\section{Geometric properties of global minimizers} \label{section 2}

In this section we analyze the geometric properties of minimizers of $\overline{e_0}(M)$. One of the key difficulties is that we lack an explicit formula for the perimeter of triple bubbles. Recall that a triple bubble is a group of three adjacent sets
bounded by six circular arcs; see Figure 1.

\begin{figure}[h!]\label{tri1}                                       
\centering
\includegraphics[height=4cm]{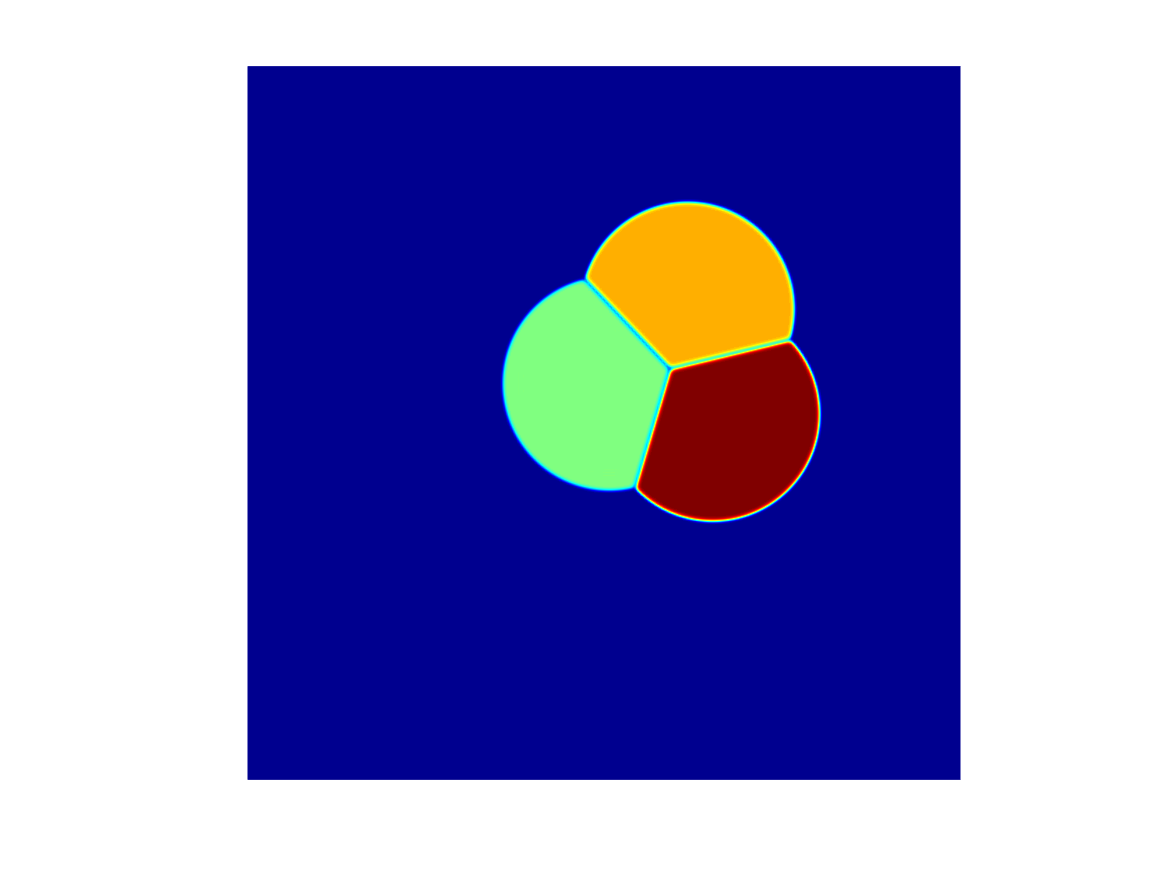} 
\includegraphics[height=4cm]{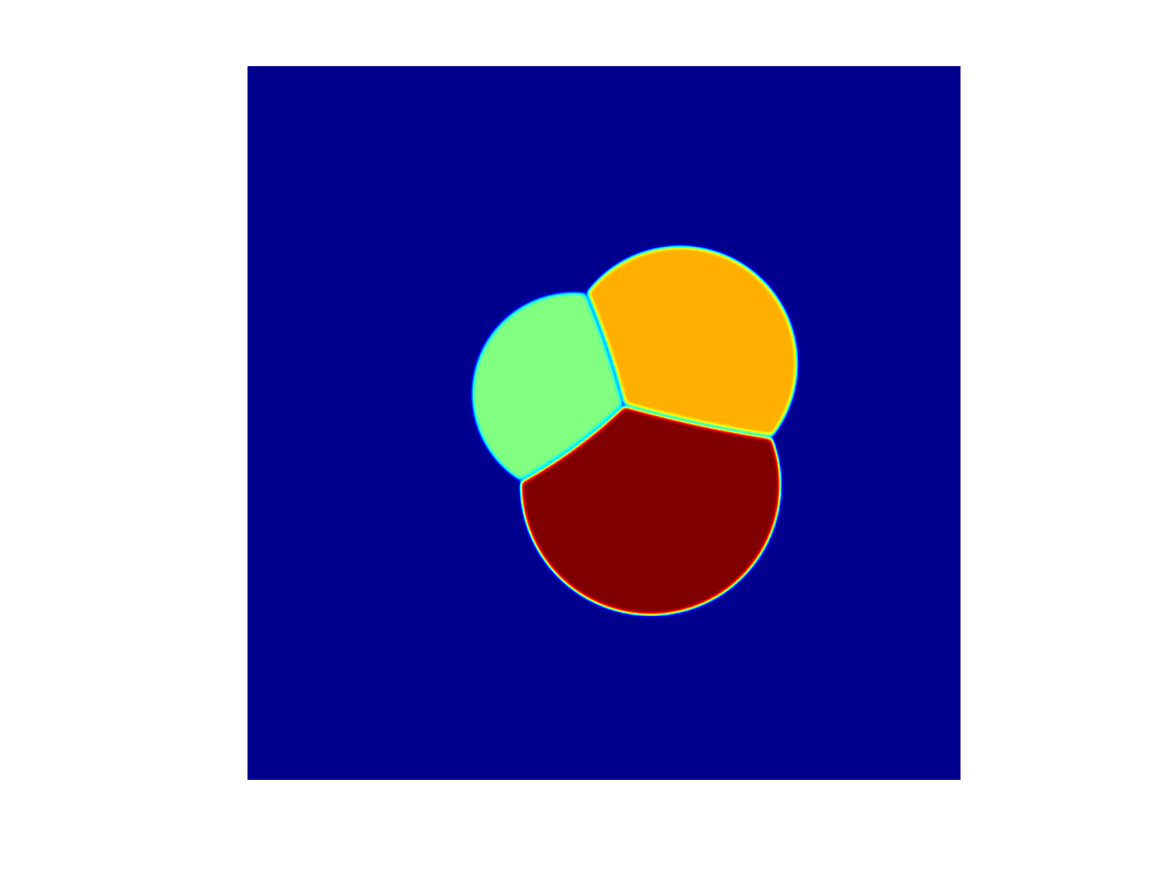}
\caption{
Symmetric (left) and asymmetric (right) triple bubbles.}
\end{figure}

Figure \ref{tribub2} shows a triple bubble $\Omega=(\Omega_1,\Omega_2,\Omega_3)$, where $\Omega_1$ is bounded by three circular arcs $C_1$, $
C_4$ and $C_5$ of radii $r_1$, $r_4$ and $r_5$; $\Omega_2$ is bounded by three circular arcs $C_2$, $
C_4$ and $C_6$ of radii $r_2$, $r_4$ and $r_6$;  $\Omega_3$ is bounded by three circular arcs $C_3$, $
C_5$ and $C_6$ of radii $r_3$, $r_5$ and $r_6$.
\begin{figure}[h!]                                      
\centering
\includegraphics[height=9cm]{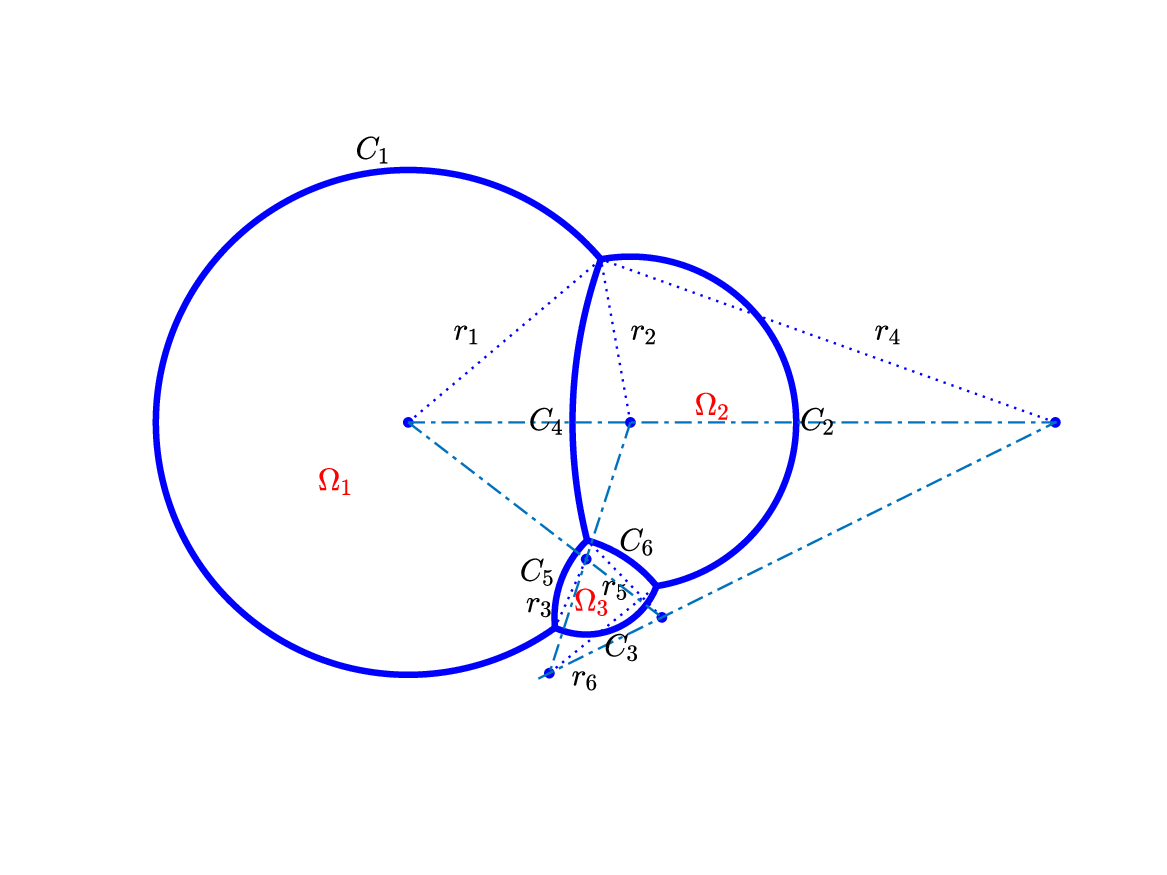} 
\caption{An asymmetric triple bubble with radii $r_i, 1 \leq i \leq 6$.}
 \label{tribub2}    
\end{figure}
In the case of a double bubble, the radii of curvature of the three circular arcs are related by a reciprocal relation \cite{ablw}.
For a triple bubble, the reciprocal relation holds for each pair of bubbles \cite{isenberg}, that is, 
\begin{eqnarray} \label{radiiCon}
\frac{1}{r_4} + \frac{1}{r_1} - \frac{1}{r_2} = 0,  \qquad
\frac{1}{r_5} + \frac{1}{r_1} - \frac{1}{r_3} = 0, \qquad
\frac{1}{r_6} + \frac{1}{r_2} - \frac{1}{r_3} = 0.\qquad
\end{eqnarray}
The area constraints $ | \Omega_i | = m_i, i = 1, 2, 3$ can be expressed as
\begin{eqnarray}
\frac{1}{2} r_1^2(\theta_1 - \sin \theta_1) - \frac{1}{2} r_4^2( \theta_4 - \sin \theta_4) - \frac{1}{2} r_5^2 (\theta_5- \sin \theta_5) + \frac{1}{2} h_4 h_5 \sin s_4 &=& m_1,
 \\
\frac{1}{2} r_2^2( \theta_2 - \sin \theta_2) - \frac{1}{2} r_6^2( \theta_6- \sin \theta_6)  + \frac{1}{2} r_4^2( \theta_4 - \sin \theta_4)  + \frac{1}{2} h_4 h_6 \sin s_5 &=& m_2, \\
\frac{1}{2} r_3^2( \theta_3 - \sin \theta_3) + \frac{1}{2} r_5^2( \theta_5 - \sin \theta_5)  + \frac{1}{2} r_6^2( \theta_6 - \sin \theta_6)  + \frac{1}{2} h_5 h_6 \sin s_6& =& m_3,
\end{eqnarray}
where $\theta_i$ and $h_i$ are the angle and chord associated with the circular arc $C_i$, respectively, $s_4$ is the angle between chords $h_4$ and $h_5$, $s_5$ is the angle between chords $h_4$ and $h_6$, and 
$s_6$ is the angle between chords $h_5$ and $h_6$.

The four points where the three arcs meet are termed triple junction points. At triple junction points, the three arcs meet at $120$ degree angle.
Let $O_i$ denotes the center corresponding to the circular arc $C_i$.
The radii conditions \eqref{radiiCon} and the 120 degree angle conditions imply that centers 
$O_1, O_2, O_4$ lie on a straight line, centers $O_1, O_3, O_5$ lie on a straight line, centers $O_2, O_3, O_6$ lie on a straight line, and centers $O_4, O_5, O_6$ lie on a straight line.

\begin{lemma}\label{perimeter derivative}
It holds
\[\frac{\partial}{\partial m_i} p(m_1,m_2,m_3) = \frac{1}{r_i},\qquad i=1,2,3, \]
where $r_i=r_i(m_1,m_2,m_3)$ denotes the radius of the lobe of type $i$ constituent; see Figure \ref{tribub2}.
\end{lemma}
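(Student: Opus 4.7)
The plan is to establish the formula via a Lagrange multiplier / envelope argument applied to the constrained minimization problem defining $p(m_1,m_2,m_3)$. Since $p(m)$ is the infimum of the perimeter over four-region partitions of $\mathbb{R}^2$ with $|\Omega_i|=m_i$, $i=1,2,3$, I expect $\partial_{m_i} p$ to equal the Lagrange multiplier $\lambda_i$ associated with the $i$-th area constraint, provided the minimizing triple bubble depends smoothly on $m$ near the configuration of interest.

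First, I would compute the first variation of the perimeter at a minimizing triple bubble $\Omega$ under a smooth normal variation $V_a$ on each arc $C_a$, $a=1,\dots,6$. Standard calculation gives
\[
\frac{d}{dt}\Big|_{t=0}\mathrm{Per}(\Omega_t)=\sum_{a=1}^6\int_{C_a}\kappa_a V_a\,ds+(\text{triple-junction boundary terms}),
\]
where $\kappa_a=1/r_a$ is the (signed) curvature of $C_a$. The junction terms cancel thanks to the $120^\circ$ meeting condition (Herring's relation) at each of the four triple points. The Euler--Lagrange system for the constrained minimization then reads: on the outer arc $C_i$ ($i=1,2,3$) separating $\Omega_i$ from $\Omega_0$, the curvature equals $\lambda_i$, so $\lambda_i=1/r_i$; on each inner arc $C_{ij}$ between $\Omega_i$ and $\Omega_j$, the curvature equals $\lambda_j-\lambda_i$. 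This latter identity is precisely the content of the reciprocal relations \eqref{radiiCon}, which provides internal consistency for the identification $\lambda_i=1/r_i$.

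Second, by the envelope theorem (or direct sensitivity analysis for constrained minimization), I get $\partial_{m_i} p(m)=\lambda_i(m)=1/r_i(m)$, which is the claimed formula. The main obstacle I anticipate is justifying the smooth dependence of the minimizing triple bubble on $m=(m_1,m_2,m_3)$ needed to apply the envelope theorem: this reduces to applying the implicit function theorem to the geometric system comprising the three reciprocal relations in \eqref{radiiCon} together with the three area equations stated above, with the six radii $r_1,\dots,r_6$ (and the associated angles $\theta_a$, determined by the geometry and the $120^\circ$ condition) as unknowns. One would need to verify non-singularity of the Jacobian of this system at the triple bubble, equivalently, non-degeneracy as a constrained critical point; once this smoothness is secured, the envelope argument closes the proof.
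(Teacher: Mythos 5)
Your variational route is sound in outline but is genuinely different from what the paper does: the paper offers no argument beyond deferring to the analogous statement for double bubbles in \cite{ablw}, where the derivative is obtained by explicit comparison \emph{constructions} --- one bulges the outer arc $C_i$ of the minimizing bubble normally to manufacture competitors for nearby masses, and the resulting two-sided bounds on the difference quotient $[p(m+\delta e_i)-p(m)]/\delta$ converge to $1/r_i$. Your Lagrange-multiplier identification $\lambda_i=1/r_i$ on the outer arcs, with the reciprocal relations \eqref{radiiCon} expressing the interior curvatures as differences of multipliers and the $120^\circ$ condition killing the junction terms in the first variation, is correct and is the conceptually cleaner picture of \emph{why} the formula holds. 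The cost is exactly the gap you flag yourself: the envelope step needs $p$ to be differentiable, i.e.\ a smooth (or at least one-sided differentiable) selection of minimizers near $m$, which in your plan requires verifying non-degeneracy of the Jacobian of the geometric system (reciprocal relations, area equations, and the closing conditions at the four triple points) --- a nontrivial check you have not carried out, and without which the argument is incomplete. The construction-based approach sidesteps this entirely: the upper bound on the difference quotient uses only a competitor built from the known minimizer at $m$, and the lower bound a competitor built from the minimizer at $m+\delta e_i$ (whose existence and uniqueness is Wichiramala's theorem \cite{proof}), with no implicit function theorem needed. To close your proof, either supply the Jacobian computation or replace the envelope step by these two one-sided comparisons, which yield the same conclusion with less regularity input.
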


\begin{proof}
The proof is similar to the double bubble case, and uses similar constructions \cite{ablw}.
\end{proof}

\begin{proposition}
\label{min mass}
	Let the total masses $M_i$ and interaction coefficients $\ggm_{ij}$ be given.
	Consider a local minimizer of $\overline{e_0}(M)$, and denote by $B^k=(m_{1}^k,m_{2}^k,m_{3}^k)$ its bubbles. Then 
	\[ \min_k m_{i}^k \ge m_{i}^{-},\qquad i=1,2,3\]
	where
	\[m_{i}^{-}  \ge c_1 \min_k (r_{i}^k)^2 \ge c_1 \min\bigg\{ \bigg[\frac{1}{\pi} \sum_{j=1}^3 \Gamma_{ij}M_j\bigg]^{-2}, \bigg[\frac{C_2 M_i}{ 4 c_2\inf E_0 }\bigg]^2 \bigg\},\qquad i=1,2,3\]
	is some lower bound depending only on $M_i$ and $\ggm_{ij}$. Here $r_{i}^k $ stands for the radius of the lobe with mass $m_{i}^k $, and $c_1,c_2,C_2>0$ are universal geometric
	constants (see \eqref{mass estimate} and \eqref{case 2 estimate} below).
	
	Consequently, the number of bubbles of type $i$ constituent does not exceed $M_i/m_{i}^{-}$.
\end{proposition}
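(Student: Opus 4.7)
The plan is to combine a geometric ``area vs.\ radius'' comparison for each lobe with the Euler--Lagrange identity generated by mass transfer between bubbles, and then close the argument by controlling the remaining free parameter through the total energy $\overline{e_0}(M)$. The geometric ingredient, which I would carry out first, records that the type-$i$ lobe $\Omega_i^k$ of a bubble $B^k$ (bounded by three circular arcs meeting at $120^\circ$ and satisfying \eqref{radiiCon}) admits universal constants $c_1,c_2,C_2>0$ with
\[c_1(r_i^k)^2 \le m_i^k \le C_2 (r_i^k)^2,\qquad \operatorname{Per}(\Omega_i^k) \ge c_2\, r_i^k.\]
This reduces the problem to a lower bound on $\min_k r_i^k$.

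For the analytic ingredient I would consider, for two bubbles $B^k, B^l$ both containing type-$i$ mass, the transfer of an infinitesimal amount $\delta$ of type-$i$ mass from $B^k$ to $B^l$. By Lemma~\ref{perimeter derivative} the first-order energy variation equals
\[\delta\Bigl[\frac{1}{r_i^l} - \frac{1}{r_i^k} + \frac{1}{2\pi}\sum_{j=1}^{3}\Gamma_{ij}(m_j^l - m_j^k)\Bigr],\]
so local minimality forces the quantity $\mu_i := 1/r_i^k + (2\pi)^{-1}\sum_j \Gamma_{ij} m_j^k$ to be independent of $k$. Choosing $l$ to realize $r_i^{\max} := \max_l r_i^l$ and using $m_j^l \le M_j$ then yields, for every $k$,
\[\frac{1}{r_i^k} \le \frac{1}{r_i^{\max}} + \frac{1}{2\pi}\sum_{j=1}^{3}\Gamma_{ij} M_j.\]

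To bound $r_i^{\max}$ from below I would argue: if every type-$i$ lobe satisfied $r_i^l \le R$, then by Step~1
\[M_i = \sum_l m_i^l \le C_2 \sum_l (r_i^l)^2 \le C_2 R \sum_l r_i^l \le \frac{C_2 R}{c_2}\,\overline{e_0}(M),\]
forcing $r_i^{\max} \ge c_2 M_i / (C_2 \overline{e_0}(M))$. Substituting this into the Euler--Lagrange bound and splitting according to which of the two terms $C_2 \overline{e_0}(M)/(c_2 M_i)$ and $(2\pi)^{-1}\sum_j \Gamma_{ij} M_j$ dominates produces
\[r_i^k \ge \tfrac{1}{2} \min\Bigl\{\Bigl[\tfrac{1}{\pi}\sum_{j=1}^{3}\Gamma_{ij}M_j\Bigr]^{-1},\ \tfrac{c_2 M_i}{2 C_2\, \overline{e_0}(M)}\Bigr\}.\]
Squaring and re-applying $m_i^k \ge c_1 (r_i^k)^2$ gives $m_i^-$, and the bound on the number of bubbles of type $i$ follows at once from $\sum_k m_i^k = M_i$.

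The main obstacle I foresee is the Euler--Lagrange step when the maximizing bubble $B^l$ happens to be only a single or double bubble, so that $m_i^l = 0$ and the transfer of mass would change the combinatorial type of $B^l$. In that situation Lemma~\ref{perimeter derivative} is not directly applicable; one must either restrict the variation to bubbles already containing type $i$, or supplement it with merge/split comparisons whose first-order expansion carries additional nucleation contributions. Confirming that the geometric constants $c_1,c_2,C_2$ can be chosen uniformly across single, double, and triple bubble configurations (again relying on the $120^\circ$ angle conditions and the reciprocal relations \eqref{radiiCon}) is a related technicality that must be settled in Step~1.
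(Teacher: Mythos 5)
Your proposal is correct and rests on exactly the same three ingredients as the paper's proof: the mass--radius comparability $c_1 (r_i^k)^2\le m_i^k\le c_2 (r_i^k)^2$, the stationarity identity $\frac1{r_i^k}+\frac{1}{2\pi}\sum_j\Gamma_{ij}m_j^k=\la_i$ obtained from infinitesimal mass transfer, and the observation that the energy dominates the sum of perimeters, hence the sum of the radii. The difference is purely organizational, and in your favor: the paper splits into the cases $\al=r_i^{\max}/r_i^{\min}\ge 2$ (where the Lagrange-multiplier identity alone gives $r_i^{\min}\ge[\frac1\pi\sum_j\Gamma_{ij}M_j]^{-1}$) and $\al\le 2$ (where all radii are comparable, one counts at least $M_i/(4c_2(r_i^{\min})^2)$ lobes, and the perimeter bound closes the argument), plus a separate Case B with approximating sequences when the extrema of $r_i^k$ are not attained. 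You instead write $\frac1{r_i^k}\le \frac1{\sup_l r_i^l}+\frac{1}{2\pi}\sum_j\Gamma_{ij}M_j$ and bound $\sup_l r_i^l$ from below directly via $M_i\le C\,(\sup_l r_i^l)\sum_l r_i^l\le C\,(\sup_l r_i^l)\,\overline{e_0}(M)$; this yields the same two-term minimum (up to constants) while dispensing with both the dichotomy on $\al$ and the limiting argument, since the inequality is stable under replacing max/min by sup/inf. The obstacle you flag at the end is not a real one: $r_i^{\max}$ is by definition taken over lobes of type $i$ that actually exist, i.e.\ over bubbles with $m_i^l>0$, and the mass-transfer variation need only be performed between two such bubbles --- which is precisely the (implicit) convention in \eqref{lagrange multiplier}; no merge/split comparison is needed for this proposition. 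The only point you share with the paper rather than improve on is that the uniformity of $c_1,c_2,C_2$ across single, double, and triple lobes is asserted rather than proved in \eqref{mass estimate}, so your Step~1 is exactly as complete as the paper's.
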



\begin{proof}
Based on \eqref{e0m2} and the stability condition
	\begin{equation}
	\frac{\partial  }{\partial m_{i}^k}  \left( \sum_{k=1}^{\infty}  e_0 (m^k)  \right ) = \frac{1}{2\pi} \sum_{j=1}^3 \Gamma_{ij}m_{j}^k +  \frac{\partial p}{\partial m_{i}^k}
	= \frac{1}{2\pi} \sum_{j=1}^3 \Gamma_{ij}m_{j}^k+ \frac{1}{r_{i}^k} =\la_i ,\qquad i=1,2,3,
\label{lagrange multiplier}
	\end{equation}
where 
	 $\la_i$ is the Lagrange multiplier, clearly independent of $k$.
	 Recall that there exist universal constants $c_2>c_1>0$ such that
\begin{equation}
	  c_1 (r_{j}^k)^2 \le m_{j}^k \le c_2 (r_{j}^k)^2 ,\qquad \fal j,k. \label{mass estimate}
\end{equation}
{
	Case A.
	We first assume that $\min_k r_{i}^k$ and $\max_k r_{i}^k$ exist.
	By \eqref{lagrange multiplier}, both of them are positive.
	 Let}
	 \[r_{i}^1=r_{i}^{\min}:=\min_k r_{i}^k,\qquad  r_{i}^2 =r_{i}^{\max}:=\max_k r_{i}^k.\]
	 The aim is to bound $r_{i}^1$ from below. By \eqref{lagrange multiplier} we get
	 \[\la_i = \frac{1}{2\pi} \sum_{j=1}^3 \Gamma_{ij}m_{j}^k  + \frac{1}{r_{i}^k } \qquad \fal k\ge1.\]
	 	In particular, taking $k=1,2$,
	 	\[\la_i = \frac{1}{2\pi} \sum_{j=1}^3 \Gamma_{ij}m_{j}^1 + \frac{1}{r_{i}^1} 
=	 	 \frac{1}{2\pi} \sum_{j=1}^3 \Gamma_{ij}m_{j}^2 + \frac{1}{r_{i}^2},\]
hence
\begin{equation}
\frac{1}{r_{i}^1}  - \frac{1}{r_{i}^2} = \frac{1}{2\pi} \sum_{j=1}^3 \Gamma_{ij}[m_{j}^2-m_{j}^1].
\label{radii difference}
\end{equation}
Let 
\begin{eqnarray}
\al:=r_{i}^2/r_{i}^1 \ge 1.      \label{defalpha}
\end{eqnarray} Two possibilities arise now.

\medskip

{\em Subcase A.1}: $\al \ge 2$. In this case, \eqref{radii difference} read
 	\[\frac{(\al-1)}{\al r_{i}^1} =\frac{1}{r_{i}^1}  - \frac{1}{r_{i}^2} 
 	= \frac{1}{2\pi} \sum_{j=1}^3 \Gamma_{ij}[m_{j}^2-m_{j}^1] \le 
 	\frac{1}{2\pi} \sum_{j=1}^3 \Gamma_{ij}M_j, \]
 	hence 
 	\[\frac{1}{r_{i}^1} \le \frac{\al}{\al-1} \frac{1}{2\pi} \sum_{j=1}^3 \Gamma_{ij}M_j 
 	\le \frac{1}{\pi} \sum_{j=1}^3 \Gamma_{ij}M_j,\]
where the last inequality is due to $\al \ge 2$, which gives $\al/(\al-1) \le 2$. Thus in this case we get
\begin{equation}
r_{i}^1 \ge \bigg[\frac{1}{\pi} \sum_{j=1}^3 \Gamma_{ij}M_j\bigg]^{-1}.
\label{case 1 estimate}
\end{equation}

\medskip

{\em Subcase A.2}: $\al \le 2$. In this case, \eqref{radii difference} alone is not sufficient to give a lower bound
on $r_{i}^1$. We must use the energy term. Note that, in this case, all radii $r_{i}^k $ satisfy
$r_{i}^1  \le r_{i}^k  \le 2r_{i}^1 $. By \eqref{mass estimate}, we have at least
\begin{equation}
\frac{M_i}{\sup_k m_{i}^k } \ge \frac{M_i}{c_2\sup_k (r_{i}^k)^2} = \frac{M_i}{c_2 (r_{i}^2)^2}
\ge \frac{M_i}{ 4 c_2 ( r_{i}^1)^2} \label{mass radii comparison}
\end{equation}
lobes of type $i$ constituent.

The total energy is not smaller than the sum of perimeters, which is, in turn, bounded via the isoperimetric inequality by
\begin{equation}
 \sum_{k\geq 1} p(m_{1}^k ,m_{2}^k ,m_{3}^k )  \ge C_1\sum_{k\geq 1} \sqrt{m_{i}^k } \overset{\eqref{mass estimate}}{\ge} C_2\sum_{k\geq 1} r_{i}^k  \ge \frac{C_2 M_i}{4 c_2 r_{i}^1 }
 \label{case 2 estimate}
\end{equation}
for some universal geometric constants $C_1,C_2>0$.  The last inequality follows from the fact that
there are at
least  $\frac{M_i}{4 c_2 (r_{i}^1)^2}$ lobes of type $i$ constituent. Thus \eqref{case 2 estimate}
gives 
$  r_{i}^1 \ge \frac{C_2 M_i}{ 4 c_2\inf E_0 }$. Combining with \eqref{case 1 estimate} and
\eqref{mass estimate} concludes the proof for this case.

\medskip

 Case B. We now remove the assumption that both
 $\min_k r_{i}^k$ and $\max_k r_{i}^k$ exist. Consider sequences
\[r_{i}^{1,\vep(k)}\searrow r_{i}^{\min}:=\inf_k r_{i}^k\ge 0,\qquad  r_{i}^{2,\vep(k)} \nearrow r_{i}^{\max}:=\sup_k r_{i}^k,\]
and denote by $m_{i}^{j,\vep(k) }$, $j=1,2$, the associated masses. Without loss of generality,
we can ensure $r_{i}^{1,\vep(k)}\le r_{i}^{2,\vep(k)}$.
Repeat the same arguments as Case A with replacing $r_i^j, m_i^j$ by $r_i^{j, \epsilon(k)}, m_i^{j,\epsilon(k)}$ respectively . Similarly let 
\[ \al(k) :=r_{i}^{2,\vep(k)}/r_{i}^{1,\vep(k)}\ge 1,  \]
and again we have two cases.

{\em Subcase B.1:} $\inf_k \al(k)\ge 2$. Similar arguments yield 
\begin{equation}
r_{i}^{1,\vep(k)} \ge \bigg[\frac{1}{\pi} \sum_{j=1}^3 \Gamma_{ij}M_j\bigg]^{-1}.
\label{case 1 estimate vep}
\end{equation}

{\em Subcase B.2}: $\inf_k \al(k)\le 2$. Note that, in this case, all radii $r_{i}^k $ satisfy
$r_{i}^{1,\vep(k)}-\dt  \le r_{i}^k  \le 2r_{i}^{1,\vep(k)}+\dt $, for some
small $\dt$ which vanishes as $\vep(k)\to 0$. Similarly we get
\begin{eqnarray}
 r_{i}^{1,\vep(k)} -\dt \ge \frac{C_2 M_i}{ 4 c_2\inf E_0 }.
 \end{eqnarray}
 Taking the limit
$\vep(k)\to 0$, which in turn forces $\dt\to 0$, concludes the proof. 

	\end{proof}

	To prove Theorem \ref{coexistence}, we need first to get an upper bound for the mass of lobes.
	
	\begin{lemma}\label{max mass}
			Let the total masses $M_i$ and interaction coefficients $\ggm_{ij}$ be given.
			Consider a local minimizer of $\overline{e_0}(M)$, and denote by $B^k=(m_{1}^k,m_{2}^k,m_{3}^k)$ its bubbles. Then 
			\[ \sup_k m_{i}^k \le m_{i}^{+}:= \frac{8 \pi}{\ggm_{ii}^{2/3}},\qquad i=1,2,3.\]
	\end{lemma}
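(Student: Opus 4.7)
The plan is to argue by contradiction via a splitting construction. Suppose some bubble $B^k$ of a local minimizer violates the bound, i.e.\ $m_{i}^k>8\pi/\ggm_{ii}^{2/3}$. I would exhibit an admissible competitor with strictly smaller energy, contradicting local minimality. The natural competitor is obtained by carving off half of the type-$i$ mass of $B^k$ as an independent single bubble: replace $B^k=(m_{1}^k,m_{2}^k,m_{3}^k)$ by a reduced triple bubble $\tilde B=(\tilde m_1,\tilde m_2,\tilde m_3)$ with $\tilde m_j=m_{j}^k$ for $j\ne i$ and $\tilde m_i=m_{i}^k/2$, together with a type-$i$ single bubble of mass $m_{i}^k/2$, thereby preserving the total mass in every constituent.

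The energy change $\Delta E$ splits into a perimeter and an interaction contribution, which I would estimate separately. For the perimeter, Lemma~\ref{perimeter derivative} gives $\partial p/\partial m_i=1/r_i>0$, so $p$ is monotone in $m_i$ and $p(\tilde m)\le p(m^k)$; the new single bubble contributes $2\sqrt{\pi m_{i}^k/2}=\sqrt{2\pi m_{i}^k}$, so the perimeter change is at most $\sqrt{2\pi m_{i}^k}$. For the interaction, the $(i,i)$ diagonal contribution changes from $\ggm_{ii}(m_{i}^k)^2/(4\pi)$ to $2\ggm_{ii}(m_{i}^k/2)^2/(4\pi)=\ggm_{ii}(m_{i}^k)^2/(8\pi)$, a decrease of $\ggm_{ii}(m_{i}^k)^2/(8\pi)$; each cross term $\ggm_{ij}m_{i}^k m_{j}^k/(2\pi)$, $j\ne i$, likewise halves, yielding a further decrease which is non-negative under the standing non-negativity hypothesis on the cross coefficients (physically, mutual repulsion).

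Combining these estimates yields
\[ \Delta E\le \sqrt{2\pi m_{i}^k}-\frac{\ggm_{ii}(m_{i}^k)^2}{8\pi}. \]
Local minimality forces $\Delta E\ge 0$, hence $\ggm_{ii}(m_{i}^k)^{3/2}\le 8\sqrt{2}\,\pi^{3/2}$, that is $m_{i}^k\le 2^{7/3}\pi/\ggm_{ii}^{2/3}<8\pi/\ggm_{ii}^{2/3}$, which proves the claim (with room to spare in the constant). The main obstacle is the perimeter estimate $p(\tilde m)\le p(m^k)$: since no explicit formula for the triple-bubble perimeter is available, one must rigorously integrate $\partial p/\partial m_i=1/r_i$ along the path from $m_{i}^k$ to $m_{i}^k/2$, ensuring that $r_i$ stays positive, and also treat potentially degenerate configurations in which some $m_{j}^k$ with $j\ne i$ is close to zero so that the triple bubble collapses to a double or single bubble. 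These are exactly the geometric issues addressed by Lemma~\ref{perimeter derivative} together with the estimates developed in the proof of Proposition~\ref{min mass}.
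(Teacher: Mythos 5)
Your proposal is correct and is essentially the paper's own argument: both split off half of the type-$i$ mass of an oversized bubble into a separate single bubble, bound the perimeter cost by $\sqrt{2\pi m_i^k}$ using Lemma~\ref{perimeter derivative} together with the single-bubble isoperimetric value, and beat that cost with the quadratic decrease of the $\Gamma_{ii}$ self-interaction while discarding the non-negative cross terms. Your bookkeeping of the diagonal term in fact gives the slightly sharper constant $2^{7/3}\pi\,\Gamma_{ii}^{-2/3}$, which of course implies the stated bound $8\pi\,\Gamma_{ii}^{-2/3}$.
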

	
	\begin{proof}
		We write the proof for $i=1$ only, since $i=2,3$ are dealt with analogously.
Consider a bubble $B$ with lobes of masses
$m_{1},m_{2},m_{3}$, and assume $m_{1}$ is bigger than $m_2$ and $m_3$. The proof follows by comparing its energy with that of 
the two bubbles $B'$ and $B''$, where $B'$ has lobes of masses $m_{1}/2,m_{2},m_{3}$, and $B''$ is a single bubble of mass
$m_1/2$. That is, we compare the energy of the original minimizer with that of the competitor obtained
by replacing $B$ with $B'\cup B''$.
The former is 
\[ E_B:=\frac{1}{4\pi}\sum_{i,j=1}^3 \ggm_{ij} m_im_j +p(m_1,m_2,m_3) , \]
while the latter is $E_{B'}+E_{B''}$, where
\begin{align}
E_{B'} &:=
\frac{1}{8\pi}\sum_{j=1}^3 \ggm_{1j} m_1m_j+ \frac{1}{4\pi}\sum_{i,j=2}^3 \ggm_{ij} m_im_j +  p(m_1/2,m_2,m_3) , \\
E_{B''} &:= \frac{\ggm_{11}m_1^2}{16\pi} +\sqrt{2\pi m_1}.\label{energy diff max mass}
\end{align}
By the minimality of $B$, we need
\begin{align}
 0 &\ge E_B - E_{B'}-E_{B''}  = \frac{1}{8\pi}\sum_{j=1}^3 \ggm_{1j} m_1m_j -\frac{\ggm_{11}m_1^2}{16\pi}  + \Delta_{Per} \notag \\
 \label{bound1}
 & = \frac{\ggm_{11}m_1^2}{16\pi}  + \frac{1}{8\pi}\sum_{j=2}^3 \ggm_{1j} m_1m_j + \Delta_{Per},
\end{align}
where
\begin{eqnarray}
 \Delta_{Per}:= p(m_1,m_2,m_3)- p(m_1/2,m_2,m_3)  - \sqrt{2\pi m_1}. \notag
\end{eqnarray}
Lemma \ref{perimeter derivative} gives
\begin{align*}
 p(m_1,m_2,m_3)- p(m_1/2,m_2,m_3)  & = \int_{m_1/2}^{m_1} \frac{\pd p( m,m_2,m_3) }{\pd m}\d m 
 =\int_{m_1/2}^{m_1} \frac{1 }{r_1}\d m \ge 0.
\end{align*}
Thus \eqref{bound1} reads
\begin{align*}
 0 &\ge \frac{\ggm_{11}m_1^2}{16\pi}  + \frac{1}{8\pi}\sum_{j=2}^3 \ggm_{1j} m_1m_j +\Delta_{Per}
 \ge \frac{\ggm_{11} m_1^2}{16\pi} 
  -  \sqrt{2\pi m_1},
\end{align*}
which concludes the proof.
	\end{proof}


	In the proof of Theorem \ref{coexistence}, we will take $\ggm_{ij}=0$ if $i\neq j$.
Since we will choose the total masses $M_i $ one after another, it is important that the 
 the lower bound
	$m_{i}^{-}$ can be made independent of $M_j$, $j\neq i$. The next result shows that this is really the case.
	
	\begin{lemma}\label{independent mass}
		Under the hypotheses $\ggm_{ij}=0$ if $i\neq j$, then 
		$m_{i}^{-}$ can be made independent of $M_j$, $j\neq i$.
	\end{lemma}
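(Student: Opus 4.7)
The plan is to revisit the proof of Proposition \ref{min mass} under the hypothesis $\ggm_{ij}=0$ for $i\ne j$ and show that the Subcase A.2 estimate---the only piece that invokes $\inf E_0$---can be replaced by a bound depending only on $M_i$ and $\ggm_{ii}$. Under this hypothesis, the Subcase A.1 bound already simplifies to $r_{i}^{1}\ge \pi/(\ggm_{ii}M_i)$, which is $M_j$-independent for $j\ne i$, so only Subcase A.2 requires attention.

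For Subcase A.2, I would replace the total-energy argument by a variational splitting of the same type used in Lemma \ref{max mass}. Given the minimizer, for a parameter $\mu>0$ to be chosen I consider the competitor obtained by (i) removing a total mass $\mu=\sum_l \mu_l$ from existing type-$i$ lobes, with $\mu_l\le m_{i}^{l}$ (so the affected lobes shrink but remain nondegenerate), and (ii) adding a disjoint single bubble of type $i$ of mass $\mu$, placed far from every other constituent. Using Lemma \ref{perimeter derivative}, the monotonicity of $r_i(m)$ in $m$ (with the other lobe masses held fixed), and \eqref{lagrange multiplier}, one obtains
\[ \Delta E \;\le\; e_0^s(\mu) \;-\; \la_i\,\mu \;+\; \tfrac{\ggm_{ii}}{4\pi}\sum_l \mu_l^2. \]
Imposing $\Delta E\ge 0$ and spreading $\mu$ across many lobes (whose number is controlled from below by Lemma \ref{max mass}) so that the quadratic remainder vanishes in the limit, then optimizing over $\mu$ at $\mu_*=4^{2/3}\pi\,\ggm_{ii}^{-2/3}$, yields the universal bound
\[ \la_i\;\le\; \tfrac{3}{4^{1/3}}\,\ggm_{ii}^{1/3}. \]

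Combining with $1/r_{i}^{k}\le \la_i$ (from \eqref{lagrange multiplier}) and then with \eqref{mass estimate}, this gives $\min_k m_{i}^{k}\ge c_1\cdot 4^{2/3}/(9\,\ggm_{ii}^{2/3})$, an $M_j$-independent lower bound. Together with the Subcase A.1 bound this produces an $m_{i}^{-}$ depending only on $M_i$ and $\ggm_{ii}$. When $M_i<\mu_*$, one either runs the same variational argument with $\mu=M_i$ in place of $\mu_*$ (yielding a weaker but still $M_j$-independent estimate) or falls back on the trivial observation that $m_{i}^{-}\le M_i$. Case B of Proposition \ref{min mass} is handled by its standard limiting procedure.

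The main obstacle is controlling the quadratic remainder $\frac{\ggm_{ii}}{4\pi}\sum_l \mu_l^2$ in the $\Delta E$ estimate. Taking $\mu_l=\mu\,m_{i}^{l}/M_i$ (a distribution proportional to each lobe's type-$i$ mass) and using Lemma \ref{max mass} to estimate $\sum_l (m_{i}^{l})^2\le m_{i}^{+}M_i$, the remainder is bounded by $\ggm_{ii}\mu^2 m_{i}^{+}/(4\pi M_i)$, which depends only on $M_i$ and $\ggm_{ii}$ and is negligible compared to the leading terms provided $M_i$ is not too small; outside that regime the trivial bound $m_{i}^{-}\le M_i$ suffices.
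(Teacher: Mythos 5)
Your argument is essentially correct but takes a genuinely different route from the paper's. The paper handles the problematic case $\al\le 2$ by deleting $n$ whole type-$i$ lobes and merging them into one single bubble; minimality then forces either a universal bound on the number $N_i$ of lobes or a lower bound $(m_i^+)^{3/2}\ge c/\ggm_{ii}$, and the conclusion follows from the ratio bound $m_i^+/m_i^-\le 4c_2/c_1$ available when $\al\le2$. You instead shrink many lobes by small amounts and deposit the excess mass $\mu$ into a new far-away single bubble, which converts minimality into a direct upper bound on the Lagrange multiplier, $\la_i\le 3\cdot 4^{-1/3}\ggm_{ii}^{1/3}$ (plus a controllable quadratic remainder), and then \eqref{lagrange multiplier} and \eqref{mass estimate} bound every radius and mass from below. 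Your route is cleaner in that it dispenses with the case analysis on $\al$ and the counting of lobes, and it yields a bound on $m_i^-$ depending only on $\ggm_{ii}$ (not even on $M_i$) once $M_i\gtrsim \ggm_{ii}^{-2/3}$; the paper's route avoids any appeal to monotonicity of $r_i$ and only needs the crude inequality $\partial p/\partial m_i\ge 0$.

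Two caveats. First, your estimate $\Delta E\le e_0^s(\mu)-\la_i\mu+\frac{\ggm_{ii}}{4\pi}\sum_l\mu_l^2$ requires $\frac{1}{r_i(m)}\ge\frac{1}{r_i(m_i^l)}$ for $m\le m_i^l$, i.e.\ that $r_i$ is nondecreasing in $m_i$ with the other lobe masses fixed. This is geometrically plausible and of the same nature as Lemma \ref{perimeter derivative} (which the paper also cites without proof), but it is an extra unproven ingredient you should state explicitly; without it the $-\la_i\mu$ term degrades to $-\sum_l\frac{\ggm_{ii}m_i^l\mu_l}{2\pi}$ and the multiplier bound is lost. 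Second, in the regime $M_i<\mu_*$ your fallback ``$m_i^-\le M_i$'' is an upper bound and proves nothing; only your first alternative (rerunning the argument with $\mu=M_i$, which indeed gives an $M_j$-independent bound $\la_i\le 2\sqrt{\pi/M_i}+\ggm_{ii}M_i/(2\pi)$) closes that case, so you should drop the second alternative.
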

	
	\begin{proof}
	         Here we only discuss the scenario that $\min_k r_{i}^k$ and $\max_k r_{i}^k$ exist. For the case of nonexistence, the proof is similar to Case B in Proposition \ref{min mass}.
		Let $\al$ be the ratio defined in \eqref{defalpha}. 
		There are two cases.
		
		\medskip
		
		{\em Case} $\al \ge 2$. Then \eqref{case 1 estimate} combined with \eqref{mass estimate} give directly
		\[ \min_{k} m_{i}^k  \ge c_1  \min_{k} ( r_{i}^k )^2 \ge c_1   \bigg[\frac{1}{\pi} \sum_{j=1}^3 \Gamma_{ij}M_j\bigg]^{-2} =   \frac{c_1\pi^2}{ \Gamma_{ii}^2 M_i^2} . \]
		%
		\medskip
		
		{\em Case} $\al \le 2$. By \eqref{mass estimate} we get
		\[ \frac{m_{i}^{+}}{m_{i}^{-} } \le \frac{c_2 (r_{i}^{+})^2}{c_1 (r_{i}^{-})^2} \le \frac{4c_2}{c_1}.  \]
		We analyze what happens if we remove a certain number, say $n$, of bubbles/lobes of type $i$ constituent, and combine them
		into a single bubble. Removing one such bubble/lobe decreases the energy at least by an amount
		\[ c\sqrt{m_i} + \frac{1}{4\pi}\sum_{j=1}^{3} \ggm_{ij} m_im_j =
		c\sqrt{m_i} + \frac{\ggm_{ii} m_i^2}{4\pi} , \] 
		for some geometric constant $c>0$, hence removing $n$ such bubbles/lobes decreases the energy by
		\[E^-:=\sum_{k= 1}^n c\sqrt{m_{i}^k } + \sum_{k= 1}^n\frac{\ggm_{ii} (m_i^k)^2}{4\pi}. \] 
		Adding a single bubble of mass $\sum_{k= 1}^n m_{i}^k $ increases the energy by
		\[ E^+:= \frac{\ggm_{ii}}{4\pi}\bigg[\sum_{k= 1}^n m_{i}^k \bigg]^2 + 2\sqrt{\pi \sum_{k= 1}^n m_{i}^k }. \]
		By the minimality of our configuration, we need 
		\begin{align}
		0 \ge E^- - E^+ &=\Delta_{Per} 
		+  \frac{\ggm_{ii}}{4\pi} \bigg\{ \sum_{k= 1}^n (m_i^k)^2-\bigg[\sum_{k= 1}^n m_{i}^k \bigg]^2\bigg\} ,\qquad
		\Delta_{Per}:=c\sum_{k= 1}^n\sqrt{m_{i}^k } -2\sqrt{\pi \sum_{k= 1}^n m_{i}^k },\notag\\
		&=\Delta_{Per} - \frac{\ggm_{ii}}{4\pi} \sum_{k\neq h}
		m_{i}^k m_{i}^h 
		\ge\Delta_{Per}
		- \frac{ n(n-1) \Gamma_{ii} }{4\pi} (m_{i}^{+})^2.\label{minimality upper bound number components}
		\end{align} 
		On the other hand, 
		\begin{align*}
		\Delta_{Per}& =
		\frac{c^2(\sum_{k= 1}^n\sqrt{m_{i}^k })^2 -  4\pi \sum_{k= 1}^n m_{i}^k }{ c\sum_{k= 1}^n\sqrt{m_{i}^k } +2\sqrt{\pi \sum_{k= 1}^n m_{i}^k }}
		=\frac{(c^2-4\pi )\sum_{k= 1}^n m_{i}^k  + c^2\sum_{k\neq h}\sqrt{m_{i}^k  m_{i}^h } }{ c\sum_{k= 1}^n\sqrt{m_{i}^k } +2\sqrt{\pi \sum_{k= 1}^n m_{i}^k }}.
		\end{align*}
		Choose $c$ such that $c^2-4\pi$ be negative. The numerator is bounded from below by
		\begin{align*}
		(c^2-4\pi )\sum_{k= 1}^n m_{i}^k  + c^2\sum_{k\neq h}\sqrt{m_{i}^k m_{i}^h } &\ge
		(c^2-4\pi ) n m_{i}^+   + c^2 n(n-1) m_{i}^- \\
		&\ge\bigg[ (c^2-4\pi ) +  \frac{c^2 (n-1)c_1}{4 c_2} \bigg] n m_{i}^+,
		\end{align*}
		while the denominator is bounded from above by
		\[c\sum_{k= 1}^n\sqrt{m_{i}^k } +2\sqrt{\pi \sum_{k= 1}^n m_{i}^k } \le [cn+2\sqrt{n\pi }] \sqrt{m_{i}^+} . \]
		Therefore,
		\[\Delta_{Per}\ge \bigg[ (c^2-4\pi ) +  \frac{c^2 (n-1)c_1}{4 c_2} \bigg]\cdot [c+2\sqrt{\pi/n }]^{-1}  \sqrt{m_{i}^+},\]
		and \eqref{minimality upper bound number components} now reads
		\begin{equation}
		0\ge \bigg[ (c^2-4\pi ) +  \frac{c^2 (n-1)c_1}{4 c_2} \bigg]\cdot [c+2\sqrt{\pi/n }]^{-1}  \sqrt{m_{i}^+} -
		\frac{ n(n-1) \Gamma_{ii} }{4 \pi} (m_{i}^+)^2.\label{minimality upper bound number components 2}
		\end{equation}
		This inequality must hold for all $n\le N_i$, defined as the number of bubbles/lobes of type
		$i$ constituent. 
		If for all such admissible $n$, we have
		\[(c^2-4\pi ) +  \frac{c^2 (n-1)c_1}{4 c_2} <0,\]
		then 
		\[N_i \le 1+ \frac{4 c_2	(4\pi -c^2)}{c^2 c_1},\]
		where the right hand side term is clearly independent of $M_j$, $j\neq i$.
		Now consider the case
		$N_i > 1+ \frac{4 c_2	(4\pi -c^2)}{c^2 c_1}$
		Until now, we have not chosen $n$ yet. So choose 
		\[n:= \begin{cases}
		\bigg\lceil 1+ \frac{4 c_2	(4\pi -c^2)}{c^2 c_1}  \bigg\rceil, & \text{if } \frac{4 c_2	(4\pi -c^2)}{c^2 c_1}  \notin \mathbb{N},\\
		2+ \frac{4 c_2	(4\pi -c^2)}{c^2 c_1} , & \text{if }  \frac{4 c_2	(4\pi -c^2)}{c^2 c_1}  \in \mathbb{N}.
		\end{cases}\]
		Such $n$ ensures $(c^2-4\pi ) +  \frac{c^2 (n-1)c_1}{4 c_2} >0$, and \eqref{minimality upper bound number components 2}
		now gives
		\[ (m_{i}^+)^{3/2} \ge \frac{4 \pi}{n(n-1) \Gamma_{ii} } \bigg[ (c^2-4\pi ) +  \frac{c^2 (n-1)c_1}{4 c_2} \bigg]\cdot [c+2\sqrt{\pi/n }]^{-1} ,\]
		with the last term being purely geometric constant. Recalling that $\frac{m_{i}^{+}}{m_{i}^{-} } \le \frac{4c_2}{c_1}$ concludes the proof.
	\end{proof}

	\begin{proof}[\bf{Proof of Theorem~\ref{coexistence}}]  
		Let $\ggm$ be a diagonal matrix, i.e. $\ggm_{ij}=0$ if $i\neq j$. Based on \eqref{e0m2} - \eqref{e0s}:
		\begin{enumerate}
			 \item there cannot be two single bubbles of different type constituent. This because
			merging them into a double bubble is energetically favorable.
			
			\item there cannot be a single bubble of type $i$ constituent, and a double bubble
			without lobes of type $i$ constituent. This again because
			merging them into a triple bubble is energetically favorable.
		\end{enumerate}

	Note the number of bubbles/lobes
	of type $i$ constituent is between $M_i/m_{i}^+$ and $M_i/m_{i}^-$.
	It is of crucial importance that the bounds $m_{i}^{\pm}$ are independent of $M_j$ (see Lemmas
	\ref{max mass} and \ref{independent mass}), since this allows $M_i/m_{i}^{\pm}$ to be completely
	independent of $M_j/m_{j}^{\pm}$, $i\neq j$.
	Thus by choosing suitably large $M_i$, we can ensure to have arbitrarily large $M_i/m_{i}^+$, which ensures
	an arbitrarily large numbers of single/double/triple
	bubbles. Indeed, Choosing $M_2,M_3\ge M_1$, with $M_1/m_{1}^+ >N_1$ ensures the presence of at least $N_1$
	triple bubbles. On the other hand, the number of lobes of type $1$ constituent cannot exceed $M_1/m_{1}^-$.
	
	Thus, the masses of type $i=2,3$ constituents in triple bubbles is at most $m_{i}^+ M_1/m_{1}^-$. Then choosing sufficiently large
	$M_3\ge M_2$, such that $M_i/m_{i}^+ >m_{i}^+ N_d +  m_{i}^+ M_1/m_{1}^-$, $i=2,3$, ensure the presence of at least $N_2$
	double bubbles. Finally, noting again that the number of double bubbles cannot exceed $(M_2 -m_{i}^+ M_1/m_{1}^-) /m_{2}^-  $,
so the mass of type $3$ constituent in double bubbles cannot exceed $m_{3}^+ (M_2 -m_{i}^+ M_1/m_{1}^- ) /m_{2}^-  $, we can finally choose large $M_3$ to ensure 
	the presence of at least $N_1$ single bubbles. Clearly, such constructions leads to all single bubble being of type $3$ constituent, and all double
	bubbles of types $2$ and $3$ constituents. The proof is thus complete.

%
%
	\end{proof}

%


\section{Minimizers at $\eta$ level}\label{section 3}

Now we investigate the behavior of minimizers of $E_\eta$, $\eta>0$. The next result shows that the number of connected components
is again bounded from above.

\begin{lemma}\label{bounded number of components eta}
Let $M_i$ and $\ggm_{ij}$ be given.
Consider a sequence of minimizers $v_\eta=(v_{1, \eta} ,v_{2, \eta}, v_{3, \eta} )$ of $E_\eta$, subject to the mass constraint
$\int_{\mathbb{T}^2} v_{i, \eta } = M_i$. Denote by 
\[v_0=(v_{1,0},v_{2,0},v_{3,0}),\qquad v_{i, 0} = \sum_{k=1}^K m_{i}^k \dt_{x^{k}},\quad \sum_{i=1}^3 m_{i}^k >0, \]
the limit measure such that $v_\eta \rightharpoonup v_0$, and $E_0(v_0)=\min E_0$. Then, for all sufficiently small
$\eta>0$, the number of connected components
of each $v_{i, \eta}$ does not exceed $\sh \{k:m_{i}^k >0\}$, i.e. the number of nonzero masses $m_{i}^k$.
\end{lemma}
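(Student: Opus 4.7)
The plan is a contradiction argument: suppose along some subsequence $\eta_n\to 0$ that $v_{i,\eta_n}$ has strictly more than $K_i := \sh\{k : m_i^k>0\}$ connected components.  The first step is a compactness argument.  An $\eta$-level analogue of Lemma \ref{max mass} — proved by the same merger/replacement comparison used there, now incurring a nonlocal correction of order $|\log\eta_n|^{-1}$ which is absorbed into the error — bounds each rescaled component mass $\mu_i^{\ell}:= \eta_n^{-2}|\Omega_{i,\eta_n}^{\ell}|$ from above by $m_i^+$.  BV compactness applied to the rescaled functions $z_i^{\ell}(x) = \eta_n^2 v_{i,\eta_n}^{\ell}(\eta_n x + x_{\eta_n}^{\ell})$ centered on each component then yields, along a further subsequence, limit sets $B_i^{\ell}\ss\R^2$ of area $\mu_i^{\ell}\in[0,m_i^+]$ and limit centers $y^{\ell}\in\TT$.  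Weak-$\ast$ convergence $v_{i,\eta_n}\rhu v_{i,0}$ together with mass conservation force $\sum_{\ell:y^{\ell}=x^k}\mu_i^{\ell} = m_i^k$ at every $k$ and $\mu_i^{\ell}=0$ whenever $y^{\ell}\notin\{x^k:m_i^k>0\}$.  Since the number of components exceeds $K_i$, at least one of two alternatives must occur: either \emph{(a)} some component has vanishing rescaled mass $\mu_i^{\ell}=0$, or \emph{(b)} two distinct components converge to the same $x^k$ with $m_i^k>0$.

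The second step is to construct a strictly better competitor.  In case~(a), I would delete the mass-vanishing component, transferring its small area into an enlargement of a nearby lobe of the same type constituent.  In case~(b), I would merge the two co-located components into a single lobe of combined mass $\mu^1+\mu^2\le m_i^k\le m_i^+$, fitted into the single/double/triple bubble structure already present at $x^k$.  Using the decomposition $E_{\eta_n}(v_{\eta_n}) = \sum_{\ell}e_0^{(\ell)} + O(|\log\eta_n|^{-1})$ derived just before \eqref{e0m1}, in case~(a) the deleted component still carries perimeter at least $c\sqrt{\mu_i^{\ell}}\cdot(1+o(1))$, yielding a positive energy saving at leading order.  In case~(b), the strict subadditivity of $e_0$ on masses below $m_i^+$ — itself a consequence of the strict minimality of $v_0$ in $\overline{e_0}(M)$ together with Lemma \ref{max mass} — makes a single lobe of mass $\mu^1+\mu^2$ strictly cheaper than two separate lobes of masses $\mu^1, \mu^2$.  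In either alternative, $E_{\eta_n}$ strictly decreases for $n$ large, contradicting the minimality of $v_{\eta_n}$.

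The main obstacle is the energy bookkeeping in the merger of case~(b), when the two components meet at mutual distance of order $\eta_n$.  The logarithmic singularity in \eqref{G2def} generates nonlocal cross-interaction terms of the schematic form $\frac{\ggm_{ii}\mu^1\mu^2}{2\pi|\log\eta_n|}\bigl(-\log|x^{\ell,1}-x^{\ell,2}|\bigr)$, which are a priori of size $O(1)$ rather than $O(|\log\eta_n|^{-1})$ and could in principle destroy the comparison.  What I would need to verify, by carefully expanding $G_{\TT}$ as in \eqref{G2def}, is that the singular part of these cross terms is exactly what has already been absorbed into the $\frac{\ggm_{ij}m_im_j}{4\pi}$ contribution to $e_0^{(\ell)}$, leaving only a remainder from the regular part $R_{\TT}$ which is genuinely $O(|\log\eta_n|^{-1})$.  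Once this accounting is in place, the strict $e_0$-level subadditivity dominates the remainder, and the contradiction closes, completing the proof.
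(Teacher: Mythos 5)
Your overall strategy (uniform mass lower bound, blow-up of each component, dichotomy between vanishing-mass components and co-located components, energy comparison) is the standard one for these droplet-scaling problems, and it is in the spirit of what the paper invokes: the paper's own proof is a one-line citation of the ternary case in \cite{ablw}, so you are supplying an argument the paper omits. Case~(a) is essentially fine, and if run first it also yields the uniform-in-$\eta$ bound on the number of components that you implicitly need before you can extract component-wise limits (as written, you pass to limits of ``the components'' without first knowing there are finitely many of them).

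The genuine gap is in case~(b), and it is not quite the one you flag. Let $d_\eta$ be the distance between the two co-located components and set $\theta:=\lim \log d_\eta/\log\eta\in[0,1]$ along a subsequence. If $\theta=1$ the two components form a single cluster at scale $\eta$, and the strict gain comes from the connectedness and uniqueness of the optimal triple bubble, not from subadditivity of $e_0$; if $\theta\in(0,1)$, the cross-interaction contributes an extra $\theta\,\ggm_{ii}\mu^1\mu^2/(2\pi)>0$ on top of $e_0(\mu^1)+e_0(\mu^2)$ in the first-order liminf, so the logarithm you worry about actually works \emph{for} you in that regime. The regime that breaks your argument is $\theta=0$ with $d_\eta\to0$ (e.g.\ $d_\eta=1/\log|\log\eta|$): there the cross term vanishes at first order, and the ``strict subadditivity of $e_0$'' you invoke is simply not available. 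Optimality of the splitting $\{m^k\}$ in \eqref{mine0} only yields the non-strict inequality $e_0(\mu^1)+e_0(\mu^2)\ge e_0(\mu^1+\mu^2)$, because any refinement is an admissible competitor; equality can occur, and indeed for large masses splitting is favorable --- that is the entire mechanism behind Theorem~\ref{coexistence}. In that case the refined splitting is another minimizer of $\overline{e_0}(M)$ and no first-order contradiction arises. Closing this case requires the next order of the expansion: after multiplying by $|\log\eta|$, the cross term $-\frac{\ggm_{ii}\mu^1\mu^2}{2\pi}\log d_\eta\to+\infty$ forbids co-location, which is precisely the content (and the proof mechanism) of Proposition~\ref{minimum distance between components}. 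Equivalently, you must actually use the hypothesis $E_0(v_0)=\min E_0$ at the level of the second-order limit that governs positions; your sketch never exploits it beyond the first-order quantity $\overline{e_0}(M)$.
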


\begin{proof}
	The proof is completely analogous to the ternary case \cite{ablw}.
\end{proof}

The next result shows that, both at $\eta>0$ and $\eta =0$ levels, two connected components cannot be too close to each other.
\begin{proposition}
Let $M_i$ and $\ggm_{ij}>0$ be given.
Consider a minimizer $v_\eta=(v_{1,\eta} ,v_{2,\eta}, v_{3,\eta} )$ of $E_\eta$, subject to the mass constraint
$\int_{\mathbb{T}^2} v_{i, \eta } = M_i$. Then, denoting by $\om_{i,\eta}^k$ the connected components of $v_{i,\eta}$,
it holds
\[ \inf_{ x\in \om_{i,\eta}^k,\ y\in \om_{i,\eta}^h, \ k\neq h } |x-y| \ge \dt >0, \]
where $\dt$ is some computable constant depending only on $M_i$ and $\ggm_{ij}$.
\label{minimum distance between components}
\end{proposition}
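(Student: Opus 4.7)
The plan is a direct comparison argument: assuming that two components $\omega_{i,\eta}^k$ and $\omega_{i,\eta}^h$ lie within some small distance $d$, I would construct a competitor $\tilde v_\eta$ by translating $\omega_{i,\eta}^h$ to an empty region of $\TT$ and show it has strictly smaller energy, contradicting the minimality of $v_\eta$.

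To build the translation target, I would first invoke Lemma~\ref{bounded number of components eta} to bound the total number of connected components of $v_\eta$ by some $N=N(M,\ggm)$ for $\eta$ small. Since each component has diameter $O(\eta)$, for sufficiently small $\eta$ one can find a ball $B_{\rho_0}(\xi) \subset \TT$ of fixed radius $\rho_0=\rho_0(N)>0$ disjoint from every component of $v_\eta$. The competitor $\tilde v_\eta$ is then obtained by replacing $\omega_{i,\eta}^h$ with a translate $\omega_{i,\eta}^h + \tau \subset B_{\rho_0}(\xi)$, keeping all other components fixed. The mass constraint, the perimeter term, and the self-interaction of the translated blob are all preserved.

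Only cross-interactions contribute to the energy change. Using the decomposition $G_\TT(z)=-\frac{1}{2\pi}\log|z|+R_\TT(z)$ with $R_\TT\in C^\infty(\TT)$, and exploiting that each $\omega_{j,\eta}^l$ has diameter $O(\eta)\ll d,\rho_0$, one finds that the original cross-interaction between $\omega_{i,\eta}^k$ and $\omega_{i,\eta}^h$ satisfies
\[\int\!\!\int G_\TT(x-y)\,v_{i,\eta}^k(x)\,v_{i,\eta}^h(y)\,dx\,dy \;=\; m_i^k m_i^h\Big(-\tfrac{1}{2\pi}\log d + O(1)\Big),\]
while in the competitor it drops to $m_i^k m_i^h\cdot O(1)$; the remaining pairwise interactions involving the translated blob change by at most $C(M,\ggm)/|\log\eta|$. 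Combining,
\[E_\eta(\tilde v_\eta) - E_\eta(v_\eta) \;\le\; -\frac{\ggm_{ii}\, m_i^k\, m_i^h}{2\pi|\log\eta|}\log\frac{\rho_0}{d} + \frac{C(M,\ggm)}{|\log\eta|}.\]

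Finally, since the masses $m_i^{k,\eta}$ admit a uniform positive lower bound $m_i^-(M,\ggm)$ for $\eta$ small (obtained by passing to the weak-$\ast$ limit via Lemma~\ref{bounded number of components eta} and applying Proposition~\ref{min mass} to the limiting configuration), the choice $\delta := \rho_0 \exp\!\bigl(-2\pi C/(\ggm_{ii}(m_i^-)^2)\bigr)$ forces the right-hand side to be strictly negative whenever $d<\delta$, giving the contradiction. The main obstacle is the uniform control in the energy estimate: the logarithmic expansion of $G_\TT$ against thin supports must be checked carefully, and the various cross-interactions involving all the coefficients $\ggm_{ij}$ with $j\neq i$ must be dominated by the $C/|\log\eta|$ term uniformly in the configuration. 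A secondary subtlety is the transfer of the mass lower bound from the $e_0$-level of Proposition~\ref{min mass} to the $\eta$-level minimizer, which relies on the $\Gamma$-convergence implicit in Lemma~\ref{bounded number of components eta}.
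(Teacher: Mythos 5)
Your overall strategy is the same as the paper's: compare the minimizer with a competitor obtained by translating one offending piece far away, note that perimeter and self\mbox{-}interaction are translation invariant, and extract a contradiction from the $-\frac{1}{2\pi}\log d$ singularity of $G_{\mathbb{T}^2}$ once the masses are bounded below uniformly in $\eta$. However, there is a genuine gap in the step where you assert that ``the mass constraint, the perimeter term, and the self-interaction of the translated blob are all preserved.'' You translate a single connected component $\om_{i,\eta}^h$ of the species-$i$ phase, but that component may be one lobe of a double or triple bubble, i.e.\ it may share interfaces with lobes of other species. Detaching it increases the total perimeter $\frac{1}{2}\sum_i\int|\nabla u_i|$ by (at least) the length of the formerly shared interfaces, which is of order $\eta$ at the $u$-scale and hence contributes an increase of order $1$ to $E_\eta$ (more precisely, $p^s(m_i^h)+p(\ldots)$ exceeds the original compound-bubble perimeter by a strictly positive amount at unit scale). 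This $O(1)$ loss dominates your interaction gain $\frac{\ggm_{ii}m_i^k m_i^h}{2\pi|\log\eta|}\log\frac{\rho_0}{d}$ unless $\log\frac{1}{d}\gtrsim|\log\eta|$, so as written the argument only yields $d\gtrsim\eta^{c}$, not a bound $\dt>0$ independent of $\eta$. The paper avoids this by translating entire bubbles $C_\eta^k$ (compound objects carrying all their lobes), for which perimeter and intra-bubble interactions really are preserved; your argument can be repaired the same way, by moving the whole bubble containing $\om_{i,\eta}^h$ and checking that, since all $\ggm_{jl}>0$, every cross-interaction of the moved bubble decreases up to $O(1/|\log\eta|)$ errors.

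Two smaller differences are worth noting. First, the paper relocates \emph{all} bubbles simultaneously onto a uniform grid of $n^2$ squares with $n^2$ exceeding the (uniformly bounded) number of bubbles, which automatically produces a competitor with pairwise separations $\ge\frac{1}{2n}$; your single-translation-into-an-empty-ball variant is logically fine and a bit more economical, but you must still justify the existence of the empty ball (pigeonhole on such a grid does it). Second, you obtain the uniform lower bound on the masses by passing to the weak-$*$ limit and invoking Proposition~\ref{min mass} at the $e_0$ level, which leans on an unproved transfer of minimality and of the individual masses to the limit; the paper instead derives the bound directly at the $\eta$ level from the Lagrange multiplier condition (Step 1 of its proof), which is more self-contained and is the route you should prefer.
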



\begin{proof}
	Consider a sequence of minimizers $v_{\eta}$, such that $E_\eta(v_{\eta})=\inf E_\eta$,
	and denote by $v_0$ the limit function, corresponding to the $\eta=0$ level.  
	
	Denote by $\om_{i,\eta}^k $ the lobes of the connected components of $v_{i,\eta}$,
	and by $K_i$ the number of connected components of $v_{i,\eta}$, $i=1,2,3$.
	The main idea of the proof is that, if two connected components are too close to each other, then the logarithmic part in the
	nonlocal interaction term would be too large. 
	
	\medskip
	
	{\em Step 1. Finding an uniform lower bound on the masses.}
	Lemma \ref{bounded number of components eta} gives that, for all sufficiently
small $\eta$, the number of connected
components of $v_{i,\eta}$ does not exceed $\# \{k:m_{i}^k >0\}$. 
Therefore, 
	\[ m_{i,\eta}^+ :=\sup_{k} |\om_{i,\eta}^k| \ge \frac{M_i}{\# \{k:m_{i}^k >0\}}>0. \]
Note the right hand side is crucially independent of $\eta$. 
In particular, there exists a lobe with radius $r_{i,\eta}^+\ge c_1 m_{i,\eta}^+$, for some
geometric constant $c_1$. Since each $v_\eta$
is a minimizer, the Lagrange multiplier satisfies 
\[ \Lambda \le \frac{1}{r_{i,\eta}^+} +\sum_{k=1}^3\frac{\ggm_{ik} m_k }{2\pi}
\le  \frac{1}{ c_1 m_{i,\eta}^+ } +\sum_{k=1}^3\frac{\ggm_{ik} M_k }{2\pi}
\le \frac{\# \{k:m_{i}^k >0\}}{ M_i } +\sum_{k=1}^3\frac{\ggm_{ik} M_k }{2\pi}.\]
Note that, crucially, the right hand side term is independent of $\eta$. Thus any 
$m_{i,\eta}^k$ satisfies
\begin{equation}
 \Lambda \ge  \frac{1}{r_{i,k}^\eta } \Lra  
r_{i,k}^\eta\ge \Lambda^{-1}\Lra 
m_{i,\eta}^k\ge c_2 \Lambda^{-2} \label{lower bound on the masses}
\end{equation}
again for some geometric constant $c_2$.

\medskip

{\em Step 2. Constructing a grid.} With the lower bound on the mass, we can now
focus our attention on bounding the lowest distance between two bubbles.
Denote by $C^k_\eta $ the bubbles of $v_{\eta}$. Note the main the difference between
$C^k_\eta $ and $\om^k_{i,\eta} $ is that the former does not distinguish between the different lobes
in the same bubbles, while the latter does. Denote by
\[ d_{k,h,\eta }:=d_\H (C^k_\eta,C^h_\eta) = \max \left\{
\sup_{z\in C^h_\eta}  \inf_{z'\in C^k_\eta} |z-z'|,
\sup_{z\in C^k_\eta}  \inf_{z'\in C^h_\eta} |z-z'|
\right\} \]
the Hausdorff distance between two bubbles $C^k_\eta $, $C^h_\eta $. Note the total
number of bubbles $C^k_\eta $ does not exceed the total number of lobes $\om^k_{i,\eta} $
across all the three constituent types. Denote by $K$ such quantity. 
Choose some $n\ge \lceil \sqrt{K} \rceil+1$. Clearly, as $K\le 3\# \{k:m_{i}^k >0\}$,
 such $n$
can also be taken independent of $\eta$. Partition $\mathbb{T}^2$ into $n^2$
squares $S_k$, each of which with side length $1/n$. 

\medskip

{\em Step 3. Constructing a competitor.} Denote by
\[ d_\eta^-:=\inf_{k\neq h} d_{k,h,\eta }\]
the smallest distance between two bubbles, and let  $(C^k_\eta)^- $, $(C^h_\eta)^- $ the corresponding
bubbles. We can always construct the following competitor $u_\eta$:
\begin{itemize}
	\item let $O_k$ (resp. $B_k$) be the barycenter of $S_k$ (resp. $C^k_\eta$ ). Replace 
	$C^k_\eta $ by $C^k_\eta +O_k-B_k $. This ensures that in each $S_k$, there is only one bubble
	$C^k_\eta $. 
\end{itemize}
We now find the energy $E_\eta(u_\eta)$: since the above construction is done through a translation,
all the perimeter and the self interaction terms
\[ \sum_{ k  } \int_{C_{\eta}^k \times C_{\eta}^k }   \left [ -\frac{1}{2\pi} \log |x-y| + R_{\mathbb{T}^2} (x - y)  \right ]  \d x\d y  \]
are unchanged. 
Since $v_\eta$ is a minimizer, we have $E_\eta(u_\eta)\ge E_\eta(v_\eta)$, thus we need
\begin{align}
 \sum_{ k\neq h  } \int_{ (C_{\eta}^k +O_k -B_k) \times (C_{\eta}^h +O_k-B_k) } &  \left [ -\frac{1}{2\pi} \log |x-y| + R_{\mathbb{T}^2} (x - y)  \right ]  \d x\d y\notag\\
 &\ge 
\sum_{ k\neq h  } \int_{C_{\eta}^k \times C_{\eta}^h  }   \left [ -\frac{1}{2\pi} \log |x-y| + R_{\mathbb{T}^2} (x - y)  \right ]  \d x\d y. \label{energy inequality requirement}
\end{align}
Since, by construction, $C_{\eta}^k +O_k-B_k$ is contained in $B(O_k, R\eta )$,
and the side length of $S_k$ is $1/n$, and $S_k$ contains no other bubble, we get 
\[\inf_{k\neq h} d_\H (C_{\eta}^h +O_k-B_k,C_{\eta}^k+O_k-B_k) \ge \frac{1}{n}-2R\eta
\ge \frac{1}{2n} \]
for sufficiently small $\eta$.
The left hand side term of \eqref{energy inequality requirement} can be estimated through
\begin{align}
\sum_{ k\neq h  }& \int_{C_{\eta}^k \times (C_{\eta}^h +O_k-B_k) }   \left [ -\frac{1}{2\pi} \log |x-y| + R_{\mathbb{T}^2} (x - y)  \right ]  \d x\d y\notag\\
&\le \sum_{ k\neq h  } \int_{C_{\eta}^k \times (C_{\eta}^h +O_k-B_k) }   \left[ -\frac{1}{2\pi} \log \frac{1}{2n}+ \sup R_{\mathbb{T}^2}  \right]  \d x\d y
=\left[ -\frac{1}{2\pi} \log \frac{1}{2n}+ \sup R_{\mathbb{T}^2}  \right] 
\sum_{ k\neq h  }|C_{\eta}^k| |C_{\eta}^h|.\label{energy grid 1}
\end{align}
Similarly, the right hand side  of \eqref{energy inequality requirement} can be estimated through
\begin{align}
\sum_{ k\neq h  } &\int_{C_{\eta}^k \times C_{\eta}^h  }   \left [ -\frac{1}{2\pi} \log |x-y| + R_{\mathbb{T}^2} (x - y)  \right ]  \d x\d y \notag\\
&\ge |(C^k_\eta)^-| |(C^k_\eta)^-|\left [ -\frac{1}{2\pi} \log ( d_\eta^- +2R\eta 
 ) +\inf R_{\mathbb{T}^2}  \right ] +  \sum_{ k\neq h  }|C_{\eta}^k| |C_{\eta}^h|
 \left[ -\frac{1}{2\pi} \log \sqrt{2}+ \inf R_{\mathbb{T}^2}  \right] .
 \label{energy grid 2}
\end{align}
Combining with \eqref{energy grid 1} gives the necessary condition
\begin{align*}
|(C^k_\eta)^-| |(C^k_\eta)^-|\bigg [ -\frac{1}{2\pi} \log ( d_\eta^- +2R\eta 
) +\inf R_{\mathbb{T}^2}  \bigg] &+  \sum_{ k\neq h  }|C_{\eta}^k| |C_{\eta}^h|
\left[ -\frac{1}{2\pi} \log \sqrt{2}+ \inf R_{\mathbb{T}^2}  \right] \\
&\le \left[ -\frac{1}{2\pi} \log \frac{1}{2n}+ \sup R_{\mathbb{T}^2}  \right] 
\sum_{ k\neq h  }|C_{\eta}^k| |C_{\eta}^h|.
\end{align*}
Finally, noting that the masses $|C_{\eta}^k|$ are bounded from above by $M_1+M_2+M_3$,
and from below by \eqref{lower bound on the masses}, both of which are clearly independent
of $\eta$, we conclude that $ d_\eta^-\ge \dt$ for some $\dt$ independent of $\eta$ too.
\end{proof}

 \end{document}